\let\equation=\gather
\let\endequation=\endgather
\numberwithin{equation}{section}
\renewcommand*{\@fnsymbol}[1]{\ensuremath{\ifcase#1\or 1\or 2\or
   3\or 4\else\@ctrerr\fi}}
\newtheorem{theorem}{Theorem}[section]
\newtheorem{corollary}[theorem]{Corollary}
\newtheorem{lemma}[theorem]{Lemma}
\newtheorem{proposition}[theorem]{Proposition}
\theoremstyle{definition}
\theoremstyle{remark}
\newtheorem{remark}[theorem]{Remark}
\newcounter{assum}
\newenvironment{assum}[1][]{\ifx\newenvironment#1\newenvironment\refstepcounter{assum}\fi\equation\tag{\ensuremath{\mathrm{A}\theassum#1}}}{\endequation}
\newcommand{\R}{\mathbb{R}}
\newcommand{\X}{{\mathbb{R}^d}}
\newcommand{\N}{\mathbb{N}}
\newcommand{\F}{\mathcal{F}}
\newcommand{\B}{\mathcal{B}}
\newcommand{\E}{\mathbb{E}}
\newcommand{\xinf}{\mathcal{X}_{\infty}}
\newcommand{\xt}{\mathcal{X}_T}
\newcommand{\x}{\mathcal{X}}
\newcommand{\xtb}[2]{\mathcal{X}_{T}^{#1,#2}}
\newcommand{\xtbt}[3]{\mathcal{X}_{#3}^{#1,#2}}
\newcommand{\om}{\omega}
\newcommand{\la}{\lambda}
\newcommand{\intred}{\int\limits_{\X}}
\newcommand{\ra}{\rightarrow}
\newcommand{\inte}[2]{\int\limits_{#1}^{#2}}
\newcommand{\ka}{\varkappa}
\newcommand{\kap}{\varkappa^+}
\newcommand{\kam}{\varkappa^-}
\renewcommand{\k}{\kappa}
\title{Global stability in a nonlocal reaction-diffusion~equation}
\author{Dmitri Finkelshtein\thanks{Department of Mathematics,
Swansea University, Singleton Park, Swansea SA2 8PP, U.K. ({\tt d.l.finkelshtein@swansea.ac.uk}).} 
\and Yuri Kondratiev\thanks{Fakult\"{a}t
f\"{u}r Mathematik, Universit\"{a}t Bielefeld, Postfach 110 131, 33501 Bielefeld,
Germany ({\tt kondrat@math.uni-bielefeld.de}).} 
\and Stanislav Molchanov\thanks{Department of Mathematics and Statistics, University of North Carolina Charlotte, NC 28223, USA ({\tt smolchan@uncc.edu}); National Research University ``Higher School of Economics'', Russia.}
\and Pasha Tkachov\thanks{Fakult\"{a}t
f\"{u}r Mathematik, Universit\"{a}t Bielefeld, Postfach 110 131, 33501 Bielefeld,
Germany ({\tt ptkachov@math.uni-bielefeld.de}).}}
\date{\today}
\begin{document}

\maketitle

\begin{abstract}
We study stability of stationary solutions for a class of non-local semilinear parabolic equations. To this end, we prove the Feynman--Kac formula for a  L\'{e}vy processes with time-dependent potentials and arbitrary initial condition. We propose sufficient conditions for asymptotic stability of the zero solution, and use them to the study of the spatial logistic equation arising in population ecology. For this equation, we find conditions which imply that its positive stationary solution is asymptotically stable. We consider also the case when the initial condition is given by a random field.

\textbf{Keywords:} nonlocal diffusion, Feynman--Kac formula, L\'{e}vy processes, reaction-diffusion equation, semilinear parabolic equation, monostable equation, nonlocal nonlinearity

\textbf{2010 Mathematics Subject Classification:} 35B40, 35B35, 60J75, 60K37  
\end{abstract}

\section{Introduction}
\subsection{Overview of results}
The aim of this paper is to study stability of stationary solutions to a class of non-local semilinear parabolic equations applying the Feynman--Kac formula. Namely, we wish to investigate bounded solutions to the following equation
\begin{equation}\label{eq:intro_semilin}
	\dfrac{\partial }{\partial t} u(x,t) = (L_J u)(x,t)+V(u(x,t)) u(x,t),\quad x\in\X,\ t>0,
\end{equation}
where $L_J = J*u-\|J\|_{L^1} u$, cf.~\eqref{eq:jumpgen}, is a generator of a pure jump Markov process, $V:C_b(\X)\to C_b(\X)$ is a bounded locally Lipschitz mapping, and the initial condition $u(\cdot,0)\in C_b(\X)$ belongs to a neighborhood of $u\equiv 0$.
Note that the Feynman--Kac formula for diffusion processes with time-dependent potentials  is known (see \cite[Theorem~5.7.6]{KS1991}). However, the corresponding result for general L\'{e}vy processes seems to be proved only recently in \cite{Per2012}, where compactly supported smooth initial conditions where assumed. 
We relax assumptions on the initial conditions, considering bounded continuous functions $C_b(\X)$ and prove the Feynman--Kac formula for the generator $L_J$ (see Propositions~\ref{prop:F_K_formula_general} and~\ref{prop:F_K_formula}). 
We propose also sufficient conditions for the asymptotic stability of the zero solution to \eqref{eq:intro_semilin} uniformly in space (see Theorem \ref{thm:stability_abstract}), and apply this to a particular equation   
\begin{equation}\label{eq:intro_BDLP}
\dfrac{\partial }{\partial t} u(x,t) = \kap(L_{a^+} u)(x,t)+\kam\bigl(\theta-(a^-*u)(x,t)\bigr) u(x,t), 
\end{equation}
where $\ka^\pm,m>0$, $\theta: = \frac{\kap-m}{\kam}>0$, and $a^\pm$ are probability kernels (see \cite{Mol1972, Mol1972a, PS2005, FKKozK2014, BP1997, FM2004}).
The equation \eqref{eq:intro_BDLP} may be considered as a non-local version of the classical logistic equation (see \eqref{eq:logisticODE} below). 
There are two constant solution to \eqref{eq:intro_BDLP}, $u\equiv0$ and $u\equiv\theta$.
Different properties and the long-time behavior of solutions to \eqref{eq:intro_BDLP}, were considered in \cite{FKT2015}.

We are interested to find sufficient conditions  which ensure that a solution to \eqref{eq:intro_BDLP} converges to the constant non-zero solution $u\equiv\theta$ uniformly in space. 
Applying Theorem \ref{thm:stability_abstract}, we prove (see Theorem \ref{thm:BDLP_theta_exp_stable_by_FK}) that a bounded initial condition, which is separated from zero, tends to $\theta>0$ exponentially fast if only $J_\theta(x) = \kap a^+(x) - \theta \kam a^-(x)\geq 0$, for all $x\in\X$. 
The condition on $J_\theta$ may be relaxed under more restrictive assumptions on the initial condition. 
Namely, introducing a parameter in the initial condition and considering the analytical decomposition of the corresponding solution with respect to the parameter, one can show that if $\|J_\theta\|_{L^1}<\kap$ and if the initial condition lies in a ball centered at $\theta$, then the solution tends to $\theta$ exponentially fast (see Theorem \ref{thm:BDLP_theta_exp_stable_anal_dec}). 
An example of a parameter constructed by a stationary random field provides an enhanced asymptotic for the convergence (see Theorem \ref{thm:random_field}).

\subsection{Basic notations}
Let $\B(\X)$ be the Borel $\sigma$-algebra on the $d$-dimensional Euclidean space $\X$,  $d\geq 1$. Let $C_b(\X)$ and $B_b(\X)$ denote the spaces of all bounded continuous, respectively, bounded Borel measurable functions on~$\X$. The functional spaces become Banach ones being equipped with the norm
\begin{equation*}
	\lVert v\rVert_\infty:=\sup_{x\in\X}\lvert v(x)\rvert.
\end{equation*}

For any $J\in L^1(\X):=L^1(\X,dx)$ and $v\in B_b(\X)$, one can define the classical convolution 
\[
	(J*v)(x):=\int_\X J(x-y)v(y)\,dy.
\]

Let $J\in L^1(\X)$ be non-negative. Consider the following bounded operator (in any of the Banach spaces above)
\begin{equation}\label{eq:jumpgen}
	(L_J v)(x):=\int_\X J(x-y) \bigl(v(y)-v(x)\bigr) \,dy=(J*v)(x)-\mu v(x),
\end{equation}
where $\mu:=\int_\X J(y)\,dy>0$.

Let $X_t$ be a jump-process with the state space $(\X,\B(\X))$ and the natural filtration $\F_t=\sigma(X_s\mid s\leq t)$ whose generator is $L_J$ (for details see \cite{EK1986}). 
It is well known that $X_t$ is a Markov process and, for all $s,t\geq0$, $f\in B_b(\X)$, the following equation holds,
\begin{align}\label{eq:Markov_property}
	\E \bigl[f(X_{t+s})\vert X_t\bigr] &= \E \bigl[f(X_{t+s})|\mathcal{F}_t\bigr] \nonumber \\
													 	&= \inte{\X}{} f(y) p(X_t-y,s) dy = (p*f)(X_t).
\end{align}
where $p(x,t)$ is the transition density of $X_t$. Namely, $p(x,0)=\delta(x)$ and $p(x,t)$ satisfies the following equation
\begin{equation*}
	\dfrac{\partial p}{\partial t}(x,t) =(L_{J}p)(x,t), \qquad x\in\X,\ t>0.
\end{equation*}

For an interval $I\subset\R_+:=[0,\infty)$, consider the Banach spaces $C_b(I\to E)$ of continuous $E$-valued functions on $I$, where $E$ is a space above, with the norm
\begin{equation*}
\lVert u\rVert_I:=\sup_{t\in I} \lVert u(\cdot,t)\rVert_\infty.
\end{equation*}
For the simplicity of notations, we set, for any $T_2>T_1\geq0$, $T>0$,
\begin{equation*}
	\x_{T_1,T_2}:= C_b\bigl([T_1,T_2],C_b(\X)\bigr), \quad \xt:=\x_{0,T},\quad \xinf:=C_b\bigl(\R_+,C_b(\X)\bigr),
\end{equation*}
with the corresponding norms $\|\cdot\|_{T_1,T_2}$, $\|\cdot\|_T$, $\|\cdot\|$.

\section{The Feynman--Kac Formula and Stability}
Let $u=u(x,t)$ describe the local density of a system at the point $x\in\X$, $d\geq 1$, at the moment of time $t\in \R_+$. 
Prove now a version of the Feynman--Kac formula for the time-dependent potential and operator $L_J$, cf.~e.g. \cite[Theorem~2.5]{DvC2000}, \cite[Theorem~5.7.6]{KS1991}. Consider a perturbed equation
\begin{equation}\label{eq:pert}
	\begin{cases}
		\dfrac{\partial }{\partial t} u(x,t) = (L_J u)(x,t)+W(x,t) u(x,t),\quad t\in[0,T],\\
		u(x,0)=u_{0}(x)\in C_b(\X),
	\end{cases}
\end{equation}
where $W\in\xt$. Then, clearly, \eqref{eq:pert} has a unique solution in $\xt$. The following theorem states that the solution will satisfy the Feynman--Kac formula.

\begin{proposition}\label{prop:F_K_formula_general}
        Let $u$ solves \eqref{eq:pert} for $t\in[0,T]$. Then 
\begin{equation}\label{eq:FKtime}
	u(x,t)=\E^x u_0(X_t) \exp\biggl(\int_0^t W(X_{t-s},s)\,ds\biggr), \quad x\in\X, t\in[0,T].
\end{equation}
\end{proposition}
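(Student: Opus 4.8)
The plan is to reverse time and realise both sides of \eqref{eq:FKtime} as the endpoint expectations of one bounded martingale. Fix $t\in(0,T]$ and set $v(x,s):=u(x,t-s)$ for $s\in[0,t]$. Since $L_J$ is a bounded operator on $C_b(\X)$ and $W\in\xt$, the Cauchy problem \eqref{eq:pert} is an ordinary differential equation in the Banach space $C_b(\X)$, so $u$ is continuously differentiable in $t$; hence $v$ is $C^1$ in $s$ and solves the backward equation
\[
	\frac{\partial v}{\partial s}(x,s)+(L_Jv)(x,s)+W(x,t-s)\,v(x,s)=0,\qquad s\in[0,t],
\]
with $v(\cdot,0)=u(\cdot,t)$ and $v(\cdot,t)=u_0$. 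Writing $A_s:=\int_0^s W(X_r,t-r)\,dr$ and, for the process started at $x$,
\[
	M_s:=v(X_s,s)\,e^{A_s},\qquad s\in[0,t],
\]
one has $M_0=v(x,0)=u(x,t)$, while the substitution $r\mapsto t-r$ gives $M_t=u_0(X_t)\,e^{A_t}=u_0(X_t)\exp\bigl(\int_0^t W(X_{t-s},s)\,ds\bigr)$. Thus it suffices to prove $\E^x M_t=\E^x M_0$.

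To this end I would show that $M$ is an $(\F_s)$-martingale. The process $X$ is the time-homogeneous pure-jump Lévy process (in fact a compound Poisson process) generated by the bounded operator $L_J$, so Dynkin's formula for time-dependent test functions applies: since $v(\cdot,r)$, $\partial_r v(\cdot,r)$ and $(L_Jv)(\cdot,r)$ are bounded uniformly on $[0,t]$, the process
\[
	N_s:=v(X_s,s)-v(X_0,0)-\int_0^s\Bigl(\frac{\partial v}{\partial r}(X_r,r)+(L_Jv)(X_r,r)\Bigr)\,dr
\]
is a bounded martingale. The process $s\mapsto e^{A_s}$ is continuous, adapted, of finite variation with $d(e^{A_s})=W(X_s,t-s)\,e^{A_s}\,ds$, and bounded by $e^{t\|W\|_T}$. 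Integration by parts (there is no covariation term, $e^{A_\cdot}$ being continuous of finite variation, and the countably many jump times of $X$ do not affect the Lebesgue integrals) then gives
\begin{align*}
	dM_s&=e^{A_s}\,d\bigl(v(X_s,s)\bigr)+v(X_s,s)\,W(X_s,t-s)\,e^{A_s}\,ds\\
	&=e^{A_s}\Bigl(\frac{\partial v}{\partial s}+L_Jv+W(\cdot,t-s)v\Bigr)(X_s,s)\,ds+e^{A_s}\,dN_s .
\end{align*}
The drift term vanishes identically by the backward equation, so $M_s-M_0=\int_0^s e^{A_r}\,dN_r$; as the integrand is bounded and predictable, $M$ is a local martingale, and being bounded on $[0,t]$ (by $\|u\|_T\,e^{t\|W\|_T}$) it is a true martingale. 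Hence $\E^x M_t=\E^x M_0=u(x,t)$, which is \eqref{eq:FKtime}.

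The main obstacle is making this infinitesimal computation rigorous for the jump process $X$: justifying Dynkin's formula for the time-dependent functional $v(X_s,s)$, the product rule, and the passage from a local to a true martingale. All of this is in fact elementary because $L_J$ is bounded — Dynkin's formula follows from the forward (Kolmogorov) equation for $X$ (or by conditioning on the finitely many jumps in $[0,t]$), and the local-to-true reduction is immediate from boundedness of $u$ and of $e^{A_\cdot}$ on the compact interval $[0,t]$. If one prefers to avoid stochastic calculus altogether, the alternative is to expand $e^{A_t}$ in its Taylor series, use the Markov property \eqref{eq:Markov_property} to rewrite each term as an iterated space--time integral, and match the resulting series term by term with the Picard iteration of the mild formulation $u(\cdot,t)=p(\cdot,t)*u_0+\int_0^t p(\cdot,t-s)*\bigl(W(\cdot,s)u(\cdot,s)\bigr)\,ds$ of \eqref{eq:pert}.
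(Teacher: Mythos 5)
Your argument is correct, but it takes a genuinely different route from the paper's. You reverse time and exhibit $M_s = u(X_s,t-s)\exp\bigl(\int_0^s W(X_r,t-r)\,dr\bigr)$ as a bounded martingale, via Dynkin's formula for time-dependent test functions together with integration by parts against the continuous finite-variation factor $e^{A_s}$; the identity $\E^x M_t=M_0$ then \emph{is} \eqref{eq:FKtime}. This is the classical martingale proof of Feynman--Kac (the pattern of Theorem~5.7.6 in Karatzas--Shreve, which the paper cites for diffusions), and the stochastic-calculus ingredients you invoke are indeed unproblematic here: $L_J$ is bounded, so $X$ is compound Poisson, $u$ is $C^1$ in $t$ with values in $C_b(\X)$, and all domain questions are vacuous; the local-to-true martingale step is immediate from the uniform bound $\|u\|_T e^{T\|W\|_T}$. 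The paper instead carries out precisely the ``alternative'' you sketch in your closing sentence: it iterates Duhamel's formula to write $u=\sum_{j=0}^n Q^j(p*u_0)+Q^{n+1}u$, proves by induction --- using only the Markov property \eqref{eq:Markov_property}, the tower rule and Fubini --- that $Q^n(p*u_0)$ conditioned on $\F_{t-s}$ equals $\tfrac{1}{n!}\,\E\bigl[u_0(X_t)\bigl(\int_0^s W(X_{t-\tau},\tau)\,d\tau\bigr)^n\bigm|\F_{t-s}\bigr]$, and kills the remainder with $\|Q^{n+1}u\|_T\le \tfrac{T^{n+1}}{(n+1)!}\|W\|_T^{n+1}\|u\|_T$. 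The trade-off: your route is shorter once Dynkin's formula and the product rule for the jump process are granted, and it adapts more directly to unbounded L\'evy generators provided $u(\cdot,s)$ stays in the domain; the paper's series argument is entirely self-contained and elementary --- no stochastic calculus at all --- which is well suited to the bounded generator $L_J$ and the merely bounded continuous initial data that are the point of the paper's relaxation.
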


\begin{proof}

For $f\in\xt$, we denote
	\begin{equation}\label{eq:T_def}
		(Qf)(x,t) = \inte{0}{t} \inte{\X}{} p(x-y,t-s) W(y,s) f(y,s) dy ds.
	\end{equation}
By Duhamel's formula,
\begin{align}
	u(x,t) & =  (p*u_0)(x,t) + \bigl(Qu\bigr)(x,t) = (p*u_0)(x,t) + \bigl(Q(p*u_0 + Qu)\bigr)(x,t) \nonumber \\
				 & =  (p*u_0)(x,t) + \bigl(Q(p*u_0)\bigr)(x,t) + Q^2\bigl((p*u_0) + Qu\bigr)(x,t) \nonumber \\
			   & =  \dots \textit{( by induction )} \dots \nonumber \\
				 & =  \sum\limits_{j=0}^n \bigl( Q^j(p*u_0) \bigr)(x,t) + \bigl( Q^{n+1}u \bigr)(x,t). \label{eq:Duhamel}
\end{align}
By \eqref{eq:Markov_property} and \eqref{eq:Duhamel},
\begin{align*}
	(p*u_0)(X_{t-s},s) & = \E\bigl[ u_0(X_t) \vert \F_{t-s} \bigr], \nonumber \\
	\bigl(Q(p*u_0)\bigr)(X_{t-s},s) & = \inte{0}{s} \inte{\X}{} p(X_{t-s}-y,s-\tau) W(y,\tau) (p*u_0)(y,\tau) dy d\tau \nonumber \\
							 & = \inte{0}{s} \E \bigl[W(X_{t-\tau},\tau) (p*u_0)(X_{t-\tau},\tau) \vert \F_{t-s} \bigr] d\tau \nonumber \\
							 & = \inte{0}{s} \E \Bigl[W(X_{t-\tau},\tau)\,
							 \E\bigl[ u_0(X_t) \vert \F_{t-\tau}\bigr] \Bigm\vert \F_{t-s} \Bigr] d\tau \nonumber \\
							 & = \E \biggl[u_0(X_t) \inte{0}{s} W\bigl(X_{t-\tau},\tau\bigl) d\tau \biggm\vert \F_{t-s} \biggr],  
\end{align*}
where the last equality holds by the tower rule and Fubini's theorem. 
One can continue then
\begin{align*}
	&\quad(Q^2(p*u_0))(X_{t-s},s) \\&= \inte{0}{s} \E \bigl[W(X_{t-\tau},\tau) (Q(p*u_0))(X_{t-\tau},\tau) \big\vert \F_{t-s} \bigr]  d\tau  \\
	& = \inte{0}{s} \E \biggl[W(X_{t-\tau},\tau)\, \E\Bigl[ u_0(X_t) \inte{0}{\tau}  W(X_{t-\sigma}, \sigma)d\sigma \Bigm\vert \F_{t-\tau} \Bigr] \biggm\vert \F_{t-s} \biggr]  d\tau  \\
	& =  \E \biggl[u_0(X_t) \inte{0}{s} \inte{0}{\tau} W(X_{t-\tau},\tau) W(X_{t-\sigma},\sigma) d\sigma d\tau \biggm\vert \F_{t-s} \biggr]  \\
	& =  \frac{1}{2} \E \biggl[u_0(X_t) \inte{0}{s} W(X_{s-\tau},\tau) d\tau \biggm\vert \F_{t-s} \biggr].
\end{align*}	
In the same manner, we can prove, by the induction, the following equality
\begin{equation}\label{eq:Q_n}
	(Q^n (p*u_0))(X_{t-s},s) = \frac{1}{n!} \E \biggl[u_0(X_t)  \Bigl( \inte{0}{s} W\bigl( X_{s-\tau},\tau \bigl) d\tau \Bigr)^n  \biggm\vert \F_{t-s} \biggr].
\end{equation}
 
By \eqref{eq:Duhamel} and \eqref{eq:Q_n} (with $s=t$),
\begin{align}
	u(x,t) &= \sum_{j=0}^n \E^x \bigl[\bigl( Q^j(p*u_0) \bigr) \bigl( X_0,t\bigr) \bigr] + \bigl( Q^{n+1}u \bigr)(x,t) \nonumber \\
				 &= \E^x \biggl[u_0(X_t) \sum_{j=1}^n \frac{1}{n!} \Bigl( \inte{0}{s} W\bigl( X_{s-\tau},\tau \bigl) d\tau \Bigr)^n \biggr] + \bigl( Q^{n+1}u \bigr)(x,t).
	\label{eq:some_representation_of_u}
\end{align}
We write $\tilde{Q}$ for the operator defined by \eqref{eq:T_def} with $W$ substituted by $|W|$. 
It follows easily that the equation similar to \eqref{eq:Q_n} holds for $\tilde{Q}$. In particular, for $u_0\equiv 1$ and $s=t$, 
\begin{equation*}
	(\tilde{Q}^n 1)(x,t) = \E^x \bigl[(\tilde{Q}^n 1)(X_0,t) \bigr]  = \frac{1}{n!} \E^x \biggl[\Bigl( \inte{0}{t} \bigl\lvert W(X_{t-\tau},\tau)\bigr\rvert d\tau \Bigr)^n\biggr].
\end{equation*}
Hence
\begin{equation*}
	\lVert  Q^{n} u \rVert_T \leq \lVert \tilde{Q}^{n} 1 \rVert_T \, \lVert u\rVert_T \leq \frac{T^n}{n!}\lVert W\rVert^n_T\, \lVert u\rVert_T.
\end{equation*}
As a result, for $n\ra\infty$, \eqref{eq:some_representation_of_u} yields \eqref{eq:FKtime}, that completes the proof.
\end{proof}

Consider now a general semi-linear evolution equation with the generator~$L_J$:
\begin{equation}\label{eq:semilinear}
\begin{cases}
\dfrac{\partial }{\partial t} u(x,t) = (L_J u)(x,t)+V(u(x,t)) u(x,t),\quad t>0\\
u(x,0)=u_{0}(x)\in C_b(\X),
\end{cases}
\end{equation}
where $V:C_b(\X)\to C_b(\X)$ is a bounded locally Lipschitz mapping, i.e. 
\begin{equation}\label{eq:loclip}
	\lVert V(u)\rVert_\infty\leq M_c \lVert u\rVert_\infty, \qquad \lVert V(u)-V(v)\rVert_\infty \leq M_c \lVert u-v\rVert_\infty, 
\end{equation}
provided that $\lVert u\rVert_\infty\leq c$, $\lVert v\rVert_\infty\leq c$.
Then, evidently, 
$\lVert V(u)u-V(v)v\rVert_\infty\leq (1+c)M_c\lVert u-v\rVert_\infty$, and hence, by e.g. \cite[Theorem~1.4]{Paz1983}, there exists a $T_{\max{}}\leq \infty$, such that the initial-value problem \eqref{eq:semilinear} has a unique mild solution $u$ on $[0,T_{\max{}})$, i.e. $u$ that solves the integral equation
\begin{equation*}
	u(x,t) = e^{tL_J}u_0(x)+\int_0^t e^{(t-s)L_J}\bigl(V(u(x,s)) u(x,s)\bigr)\,ds.
\end{equation*}
Moreover, $T_{\max{}}<\infty$ implies that $\lim\limits_{t\uparrow T_{\max{}}}\lVert u(\cdot,t)\rVert_\infty = \infty$. Note also that since $L_J$ is a bounded operator, then the mild solution will be classical one, i.e. $u\in \xt$, for any $T<T_{\max{}}$, and $u(x,t)$ is differentiable in $t$ w.r.t. the norm in $C_b(\X)$.

By Proposition \ref{prop:F_K_formula_general}, the following Feynman--Kac-type expression holds for the solution to \eqref{eq:semilinear}.
\begin{proposition}\label{prop:F_K_formula}
Let \eqref{eq:loclip} hold and $u$ be the unique classical solution to \eqref{eq:semilinear} on $[0,T]$, $T<T_{\max{}}$. Then
\begin{equation}\label{eq:FK}
	u(x,t)=\E^x \biggl[u_0(X_t)\exp\Bigl(\int\limits_0^t V(u(X_{t-s},s))\,ds\Bigr)\biggr],\quad x\in\X,\ t\in [0,T].
\end{equation}
\end{proposition}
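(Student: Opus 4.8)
The plan is to reduce Proposition~\ref{prop:F_K_formula} to Proposition~\ref{prop:F_K_formula_general} by freezing the nonlinearity along the known solution. First I would set $W(x,t):=V(u(x,t))$ for $(x,t)\in\X\times[0,T]$, where $u$ is the given classical solution of \eqref{eq:semilinear}, and check that $W\in\xt$. Boundedness follows from the first inequality in \eqref{eq:loclip} applied with $c:=\|u\|_T<\infty$, giving $\|W(\cdot,t)\|_\infty\le M_c\|u(\cdot,t)\|_\infty\le M_c\|u\|_T$ uniformly in $t\in[0,T]$. Continuity of $t\mapsto W(\cdot,t)=V(u(\cdot,t))\in C_b(\X)$ follows because $t\mapsto u(\cdot,t)$ is continuous into $C_b(\X)$ (as $u\in\xt$), because the trajectory $\{u(\cdot,t):t\in[0,T]\}$ lies in the closed ball of radius $c$ in $C_b(\X)$, and because $V$ restricted to that ball is Lipschitz by the second inequality in \eqref{eq:loclip}, hence continuous.

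With $W$ thus defined, $u$ solves the \emph{linear} problem \eqref{eq:pert} on $[0,T]$ with this particular $W$ and the same initial datum $u_0$: indeed $V(u(x,t))u(x,t)=W(x,t)u(x,t)$, so the equation in \eqref{eq:semilinear} coincides with the equation in \eqref{eq:pert}, and $u\in\xt$ is differentiable in $t$ in the norm of $C_b(\X)$ as remarked before the statement. Since \eqref{eq:pert} has a unique solution in $\xt$, Proposition~\ref{prop:F_K_formula_general} applies and yields
\[
u(x,t)=\E^x\Bigl[u_0(X_t)\exp\Bigl(\int_0^t W(X_{t-s},s)\,ds\Bigr)\Bigr],\qquad x\in\X,\ t\in[0,T].
\]
Substituting back $W(X_{t-s},s)=V(u(X_{t-s},s))$ gives exactly \eqref{eq:FK}.

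I expect no substantial obstacle here: the pathwise integrability of $s\mapsto W(X_{t-s},s)$ and the finiteness of the expectation are already subsumed in Proposition~\ref{prop:F_K_formula_general}, whose proof is carried out for a general $W\in\xt$. The only genuinely delicate point is the verification $W\in\xt$, and within that the mild subtlety that \eqref{eq:loclip} provides Lipschitz continuity of $V$ only on sets bounded by a common constant; this is enough because along the solution every $u(\cdot,t)$, $t\in[0,T]$, shares the bound $\|u\|_T$.
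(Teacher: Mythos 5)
Your proposal is correct and is exactly the argument the paper intends: the paper derives Proposition~\ref{prop:F_K_formula} from Proposition~\ref{prop:F_K_formula_general} by freezing the nonlinearity, i.e.\ taking $W(x,t)=V(u(x,t))$ along the known solution. Your explicit verification that $W\in\xt$ via \eqref{eq:loclip} fills in the one detail the paper leaves implicit.
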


Denote 
\[
\xtb{k}{l}=\{f\in\xinf|k\leq f(x,t)\leq l,\quad x\in\X,\ t\in[0,T]\}, \ k,l\in\R.
\] 
The following theorem provides sufficient conditions for the stability of the stationary solution $u\equiv 0$ to \eqref{eq:semilinear},
\begin{theorem}\label{thm:stability_abstract}
	Let there exist $p:\R^2\ra\R_+$ such that, for any $k\leq0$, $l\geq0$, 
	\begin{equation}
	\begin{aligned}
		\bigl(Vf\bigr)(x,t) \leq& - p(k,l),&&x\in\X,\ t\in[0,T],\ f\in\xtb{k}{l},\\
		p(k,l) \leq&\ p(\la k, \la l),&&\la\in[0,1]. 
	\end{aligned}\label{eq:p_monotone} 
	\end{equation}
	Suppose that $u_0\in E$ is such that, for some $c\leq 0$ and $d\geq 0$,
	\[
	c\leq u_0(x)\leq d,\quad x\in\X.
	\]
	Then, for any $T>0$, there exists a unique $u\in \xt$, which satisfies the Feynman-Kac formula \eqref{eq:FK}. Moreover, $u\in\xtbt{c}{d}{\infty}$, $\Vert u(\cdot,t)\Vert_E$ does not increase in time, and if $p(c,d)>0$, then $\Vert u(\cdot,t)\Vert_E$ converges to zero exponentially fast, namely,
	\begin{equation}\label{eq:ljapunov_exp}
		\limsup_{t\rightarrow\infty}\dfrac{\ln\Vert u_t \Vert}{t}\leq -p(0,0).
	\end{equation}
\end{theorem}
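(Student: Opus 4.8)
The plan is to route every assertion through the Feynman--Kac identity~\eqref{eq:FK} together with a time-continuation argument. The one elementary observation used repeatedly is: if $\xi$ is a random variable with $k\le\xi\le l$ a.s.\ for some $k\le0\le l$, and $\alpha$ a random variable with $0<\alpha\le e^{-\gamma}$ a.s.\ for some constant $\gamma\ge0$, then $ke^{-\gamma}\le\xi\alpha\le le^{-\gamma}$ a.s., hence $ke^{-\gamma}\le\E^x[\xi\alpha]\le le^{-\gamma}$. Fix $T<T_{\max}$, so that \eqref{eq:FK} holds on $[0,T]$ by Proposition~\ref{prop:F_K_formula}, and suppose that for some $t\le T$ and some $c'\le0\le d'$ one has $c'\le u(x,s)\le d'$ for all $x\in\X$, $s\in[0,t]$. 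Then the first line of \eqref{eq:p_monotone} gives $V(u(\cdot,s))(y)\le-p(c',d')\le0$ for all $y\in\X$, $s\in[0,t]$, so the exponential factor in \eqref{eq:FK} is at most $e^{-tp(c',d')}$; applying the observation with $\xi=u_0(X_t)\in[c,d]$ and $\gamma=tp(c',d')$ gives
\begin{equation}\label{eq:plan_basic}
	c\,e^{-tp(c',d')}\le u(x,t)\le d\,e^{-tp(c',d')},\qquad x\in\X.
\end{equation}
In particular $u(\cdot,t)\in[c,d]$, since $p(c',d')\ge0$.

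Next I would deduce $T_{\max}=\infty$ and $u\in\xtbt{c}{d}{\infty}$ by continuation in time; the delicate point is that the bounds $c\le u_0\le d$ need not be strict, so one enlarges the box. Fix $\eps>0$ and put $t_\eps:=\sup\{t\in[0,T]:c-\eps\le u(x,s)\le d+\eps\text{ for all }x\in\X,\ s\in[0,t]\}$; this set contains $0$ and is closed, since $t\mapsto u(\cdot,t)$ is $C_b(\X)$-continuous, so $t_\eps$ itself belongs to it. Applying the previous step on $[0,t_\eps]$ with $c'=c-\eps<0$ and $d'=d+\eps>0$ shows $c\le u(x,t)\le d$ for all $x\in\X$ and $t\in[0,t_\eps]$. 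Since $[c,d]\subset(c-\eps,d+\eps)$ and $t\mapsto\sup_{x}u(x,t)$, $t\mapsto\inf_{x}u(x,t)$ are continuous, the bound $c-\eps\le u\le d+\eps$ would persist on a slightly larger interval, so $t_\eps=T$. As $T<T_{\max}$ was arbitrary, $c\le u(x,t)\le d$ for all $t<T_{\max}$, hence $\|u(\cdot,t)\|_\infty\le\max(|c|,|d|)$ stays bounded, so the blow-up alternative for \eqref{eq:semilinear} forces $T_{\max}=\infty$; thus \eqref{eq:FK} holds on every $[0,T]$ and $u\in\xtbt{c}{d}{\infty}$.

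It remains to prove the monotonicity of $\|u(\cdot,t)\|_\infty$ and the rate~\eqref{eq:ljapunov_exp}. Since \eqref{eq:semilinear} is time-homogeneous, for any $t_1\ge0$ the function $s\mapsto u(\cdot,t_1+s)$ is the unique solution with initial datum $u(\cdot,t_1)\in\xtb{c}{d}$, so Proposition~\ref{prop:F_K_formula} and the observation above (with $\gamma=0$) give $|u(x,t_1+\tau)|\le\E^x\bigl[|u(X_\tau,t_1)|\bigr]\le\|u(\cdot,t_1)\|_\infty$; taking the supremum in $x$ shows $t\mapsto\|u(\cdot,t)\|_\infty$ is non-increasing. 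For the rate, \eqref{eq:plan_basic} with $c'=c$, $d'=d$ gives $\|u(\cdot,t)\|_\infty\le\max(|c|,|d|)\,e^{-tp(c,d)}$, so if $p(c,d)>0$ then $u(\cdot,t)\to0$ in $C_b(\X)$, hence $V(u(\cdot,s))\to V(0)$ in $C_b(\X)$ by continuity of $V$. Since the zero function lies in $\xtb{0}{0}$, the first line of \eqref{eq:p_monotone} with $k=l=0$ gives $V(0)(y)\le-p(0,0)$ for all $y\in\X$; thus for every $\eps>0$ there is $S_\eps$ with $V(u(\cdot,s))(y)\le-p(0,0)+\eps$ for all $y\in\X$, $s\ge S_\eps$. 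Restarting \eqref{eq:FK} at time $S_\eps$ exactly as above then yields $\|u(\cdot,S_\eps+\tau)\|_\infty\le\|u(\cdot,S_\eps)\|_\infty\,e^{-(p(0,0)-\eps)\tau}$ for all $\tau\ge0$, whence $\limsup_{t\to\infty}t^{-1}\ln\|u(\cdot,t)\|_\infty\le-(p(0,0)-\eps)$; letting $\eps\downarrow0$ gives~\eqref{eq:ljapunov_exp}.

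The main obstacle is the second paragraph. One cannot simply iterate the self-referential bound \eqref{eq:plan_basic}: \eqref{eq:FK} is only granted for $T<T_{\max}$, and when $p(c,d)=0$ the conclusion $u(\cdot,t)\in[c,d]$ it produces is not a strict improvement over the hypothesis, so a naive continuation step stalls. Enlarging the box by $\eps$ repairs this, since it turns the self-improvement into the strict inclusion $[c,d]\subset(c-\eps,d+\eps)$, which is precisely what lets the continuation in time close, and it delivers global existence at the same stroke. A secondary point worth flagging is that the sharp exponent $-p(0,0)$ (in place of the immediate $-p(c,d)$) comes from the continuity of $V$ and the inclusion $0\in\xtb{0}{0}$, not from the monotonicity of $p$ in~\eqref{eq:p_monotone}.
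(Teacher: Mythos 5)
Your argument is correct, but it follows a genuinely different route from the paper's. The paper never invokes the PDE's local well-posedness: it constructs $u$ directly as the fixed point of the Feynman--Kac operator $\Psi w_t(x)=\E^x\bigl[u_0(X_t)\exp\bigl(\int_0^t (Vw_{t-s})(X_s)\,ds\bigr)\bigr]$ on the invariant box $\xtb{c}{d}$ (contraction for $T=\frac1{2dM}$), iterates over windows $[nT,(n+1)T]$ with shrinking constants $c_n=c_{n-1}e^{-Tp(c_{n-1},d_{n-1})}$, $d_n=d_{n-1}e^{-Tp(c_{n-1},d_{n-1})}$, and extracts the decay rate from this recursion together with the second line of \eqref{eq:p_monotone}. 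You instead take the mild solution on $[0,T_{\max})$ as given, use \eqref{eq:FK} only as an a priori estimate, close the invariance of $[c,d]$ by the $\eps$-enlarged continuation argument (which is the right fix for the non-strict endpoint issue), and obtain global existence from the blow-up alternative. Your derivation of the rate is also different and in fact tighter: the paper's recursion only yields $\limsup_{t\to\infty}t^{-1}\ln\|u_t\|\le -\lim_n p(c_n,d_n)$, and since the monotonicity hypothesis gives $p(c_n,d_n)\le p(0,0)$, passing to $-p(0,0)$ tacitly requires some continuity of $p$ at the origin; your argument gets $-p(0,0)$ directly from the local Lipschitz continuity of $V$ and $0\in\xtbt{0}{0}{T}$, bypassing the monotonicity of $p$ altogether for this step. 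The one loose end is uniqueness: the theorem asserts a unique $u$ satisfying \eqref{eq:FK}, which in the paper comes for free from the contraction property of $\Psi$ on $\xtb{c}{d}$, whereas you only produce one solution (the PDE solution) satisfying \eqref{eq:FK}; you should add a line noting that two functions in $\xtbt{c}{d}{T}$ satisfying \eqref{eq:FK} coincide on a short initial interval by the same Lipschitz estimate $\lvert\Psi w_t-\Psi v_t\rvert\le dtM\|w-v\|$ and hence everywhere by iteration.
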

\begin{proof}
	Let us introduce the following operator: we set, for a $w\in\xinf$,
	\begin{equation*}
		[\Psi w_{t}](x)=\E^x \biggl[u_0\bigl(\eta(t)\bigr) \exp\Bigl(\int\limits_0^t[Vw_{t-s}]\bigl(\eta(s)\bigr)ds\Bigr)\biggr],\quad x\in\X,\ t\in I.
	\end{equation*}
	Then, for any $w\in\xtb{c}{d}$,
	\begin{equation}\label{eq:Psi_est}
		ce^{-tp(c,d)}\leq [\Psi w_{t}](x) \leq de^{-tp(c,d)},\quad x\in\X, t\in[0,T].
	\end{equation}
	Since $p$ is non-negative, one gets $\Psi(\xtb{c}{d})\subset \xtb{c}{d}$.
	Since $\vert e^{-x}-e^{-y} \vert \leq \vert x-y \vert$ for all $x,y\geq0$, then,	for all $v, w\in\xtb{c}{d}$, $t\in[0,T]$, the following estimate holds
	\begin{align*}
		\big\vert [\Psi w_t](x)-[\Psi v_t](x) \big\vert \leq dTM\|v-w\|_\infty,
	\end{align*}
	where $M=M_{\max\{-c,d\}}$ is defined by \eqref{eq:loclip}. 
	Hence $\Psi$ is a contraction map on $\xtb{c}{d}$ for $T=\dfrac{1}{2dM}$. Therefore, there exists a fixed point $u\in\xtb{c}{d}$. By \eqref{eq:Psi_est}, the function $u$ satisfies the following estimate
	\begin{equation*}
		ce^{-tp(c,d)}\leq u(x,t) \leq de^{-tp(c,d)},\quad x\in\X,\ t\in[0,T].
	\end{equation*}
	Hence, $c_1\le u(x,T)\le d_1$, $x\in\X$, where $c_1=ce^{-Tp(c,d)}$, $d_1=de^{-Tp(c,d)}$. 
	We can repeat the proof on $[T,2T]$ to extend $u$ to $\xtbt{c}{d}{2T}$, so that the following estimate holds
	\begin{equation*}
		c_1e^{-(t-T)p(c_1,d_1)}\leq u(x,t) \leq d_1e^{-(t-T)p(c_1,d_1)},\quad x\in\X,\ t\in[T,2T].
	\end{equation*}
	By induction, $u$ can be extended to $\xtbt{c}{d}{nT}$, and for any $n\in \N$, $x\in\X$, $t\in[nT,(n+1)T]$,
	\begin{equation}\label{eq:psi_est_n}
		c_n e^{-(t-nT)p(c_n,d_n)}\leq u(x,t) \leq d_n e^{-(t-nT)p(c_n,d_n)},
	\end{equation}
	where $c_0=c$, $d_0=d$, $c_n=c_{n-1} e^{-Tp(c_{n-1},d_{n-1})}$ and $d_n=d_{n-1} e^{-Tp(c_{n-1},d_{n-1})}$. 
	Hence, there exists a unique $u\in\xtbt{c}{d}{\infty}$, such that \eqref{eq:FK} and \eqref{eq:psi_est_n} hold. 
	Since $p$ is non-negative, $\{c_n\}$ is increasing and $\{d_n\}$ is decreasing. Moreover,
	\[
		c_n = \lambda c_{n-1},\quad  d_n = \lambda d_{n-1}, \quad \lambda = e^{-Tp(c_{n-1},d_{n-1})}\in [0,1]
	\]
	together with \eqref{eq:p_monotone} yield that $p(c_k,d_k)\leq p(c_n,d_n)$, for $k\leq n$.
	Therefore, $\Vert u(\cdot,t)\Vert_E$ does not increase in time.
	Let us prove by induction the following inequalities
	\begin{align}
		c_k e^{-T(n-k)p(c_{k},d_{k})}&\leq c_n,\qquad 0\leq k\leq n, \label{eq:cn_est} \\
		d_k e^{-T(n-k)p(c_{k},d_{k})}&\geq d_n , \qquad 0\leq k\leq n. \label{eq:dn_est}
	\end{align}
	The case $n=1$ is obvious. Let \eqref{eq:cn_est} and \eqref{eq:dn_est} hold for  $0\leq k\leq N$. We prove them for $0\leq k\leq N+1$. Since $c_N \leq 0$, we have
	\begin{equation*}
		c_{N+1}=c_Ne^{-Tp(c_N,d_N)}\geq c_N e^{-Tp(c_k,d_k)}\geq c_k e^{-\kam T(N+1-k)p(c_k,d_k)}.
	\end{equation*}
	Hence \eqref{eq:cn_est} is proved. Similarly, the following estimate yields \eqref{eq:dn_est}
	\begin{equation*}
		d_{N+1}=d_Ne^{-Tp(c_N,d_N)}\leq d_N e^{-Tp(c_k,d_k)}\geq d_k e^{-T(N+1-k)p(c_k,d_k)},
	\end{equation*}
	where $0\leq k\leq N$.
	By \eqref{eq:cn_est} and \eqref{eq:dn_est} with $k=0$, both $\{c_n\}$ and $\{d_n\}$ converge to zero exponentially fast if only $p(c,d)>0$. 
	Therefore, for $t\in [nT,(n+1)T]$, 
	$$\dfrac{\ln \Vert u_t \Vert}{t}\leq \dfrac{\ln\max \{d_n,-c_n\}}{Tn},$$
	and, by \eqref{eq:cn_est}, \eqref{eq:dn_est}, we have, for $k\geq0$,
	\begin{equation*}
		\limsup_{t\rightarrow\infty}\dfrac{\ln\Vert u_t \Vert}{t}\leq -p(c_k,d_k).
	\end{equation*}
	As a result, \eqref{eq:ljapunov_exp} holds, because $p(c_k,d_k)$ is increasing in $k$. This proves the theorem.
\end{proof}

\section{Spatial logistic equation}
We will consider the following equation for a bounded function $u(x,t)$, which describes the (approximate) value of the local density of a system of particles distributed in $\R^d$ according to the so-called spatial logistic model. More detailed explanation and historical remarks can be found in \cite[Subsection~6.1]{FKT2015}. Namely, let $u_t(x):=u(x,t)$, $x\in\X$, $t\in I$, solves the equation
\begin{equation}\label{eq:basic0}
		\dfrac{\partial u_{t}}{\partial t}=\kap[L_{a^+}u_{t}](x)+(\kap-m)u_t(x)-\kam u_t(x)(a^-*u_t)(x).
\end{equation}
In particular, $u(x,0)=u_0(x)$, $x\in\X$. Here $\kap L_{a^+}$ is a generator of the underlying random walk, cf. \eqref{eq:jumpgen}:
\begin{equation*}
[L_{a^+}h](x)=(a^+*h)(x)-h(x),\quad x\in\X,
\end{equation*}
which spends exponentially distributed random time $\tau$ in each particular position $x$, $P\{\tau>\rho\}=e^{-\kap \rho}$, and it makes a jump $x\rightarrow x+X,$ thereafter, where the random variable $X$ has the distribution density $a^+(x)$. The constant $\beta=\kap-m>0$ is the difference between the biological rate $\beta$ of the birth of a new particle and the mortality rate $m$. The last term in \eqref{eq:basic0} describes the competition between particles, the potential $\kam a^-(x-y)$ presents the interaction between two particles located at the points $x, y\in\X$. Equation \eqref{eq:basic0} is similar to the well-known logistic ordinary differential equation:
\begin{equation}\label{eq:logisticODE}
	\dfrac{\partial w}{\partial t}(t)=\beta w(t)-\kam w(t)^2,
\end{equation}
whose partial solution is the constant $\theta:=\dfrac{\beta}{\kam}$. All other positive solutions tend exponentially fast to $\theta$. The equation \eqref{eq:basic0} has the same solution $u(x,t)\equiv\theta$, $x\in\X$, $t\ge0$ (we suppose $\int\limits_{\R}a^{\pm}(y)dy=1$). This important particular solution is the exponentially stable attractor for \eqref{eq:basic0}.
We will study the neighborhood of the attractor, using variations of $u_0$.
Let us denote, for any $h\in E$,
\[
[Fh](x)=(\kap -m)h(x)-\kam h(x)(a^-*h)(x), \quad x\in\R^d.
\]
Then \eqref{eq:basic0} has the following form
\begin{equation}\label{eq:basic}
\begin{cases}
\dfrac{\partial u_{t}}{\partial t}(x)=\kap [L_{a^{+}}u_{t}](x)+[Fu_{t}](x)
& x\in\R^d,\ t\in I,\\
u(x,0)=u_0(x) & x\in\R^d.
\end{cases}
\end{equation}

The analysis of the non-linear parabolic equation \eqref{eq:basic0} will be based on integral equations. The first of them is given through the standard Duhamel's formula.
\begin{lemma}
Function $u$ solves \eqref{eq:basic} iff it satisfies the following equation
\begin{equation}\label{eq:basic_weak}
u_t=e^{\kap tL_{a^+}}u_0+\int\limits_0^te^{-(t-s)\kap L_{a^+}}[Fu_s]ds,\
t\in I.
\end{equation}
\end{lemma}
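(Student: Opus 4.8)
The plan is to prove the standard equivalence between the classical formulation \eqref{eq:basic} and the mild (integral) formulation \eqref{eq:basic_weak} by the usual variation-of-constants argument, exploiting that $\kap L_{a^+}$ is a \emph{bounded} operator on $C_b(\X)$, so the semigroup $e^{t\kap L_{a^+}}=\sum_{n\ge0}\frac{(t\kap L_{a^+})^n}{n!}$ converges in operator norm and is differentiable in $t$ with respect to the $C_b(\X)$-norm. In particular there are no domain issues, and every manipulation below can be carried out directly in the Banach space $C_b(\X)$.

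First I would prove the ``only if'' direction. Assume $u\in\xt$ solves \eqref{eq:basic}; note that $Fu_s$ is continuous in $s$ with values in $C_b(\X)$, since $V$ (hence $F$) is locally Lipschitz by \eqref{eq:loclip} and $s\mapsto u_s$ is continuous. Consider $g(s):=e^{-(s)\kap L_{a^+}}u_s$ for $s\in[0,t]$ --- more conveniently, fix $t$ and set $g(s):=e^{(t-s)\kap L_{a^+}}u_s$. Differentiating in $s$ using the product rule (legitimate because both factors are norm-differentiable and the semigroup is bounded), one gets
\begin{equation*}
	\frac{d}{ds}g(s)=-\kap L_{a^+}e^{(t-s)\kap L_{a^+}}u_s+e^{(t-s)\kap L_{a^+}}\frac{\partial u_s}{\partial s}
	=e^{(t-s)\kap L_{a^+}}\bigl(-\kap L_{a^+}u_s+\kap L_{a^+}u_s+Fu_s\bigr)=e^{(t-s)\kap L_{a^+}}[Fu_s],
\end{equation*}
where the commutation of $L_{a^+}$ with its own semigroup is used. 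Integrating from $0$ to $t$ gives $g(t)-g(0)=\int_0^t e^{(t-s)\kap L_{a^+}}[Fu_s]\,ds$, i.e. $u_t-e^{t\kap L_{a^+}}u_0=\int_0^t e^{(t-s)\kap L_{a^+}}[Fu_s]\,ds$, which is \eqref{eq:basic_weak} (the exponent $-(t-s)\kap L_{a^+}$ in the statement being a typo for $(t-s)\kap L_{a^+}$, as forced by consistency with \cite[Theorem~1.4]{Paz1983}). Conversely, if $u\in\xt$ satisfies \eqref{eq:basic_weak}, then because $s\mapsto e^{(t-s)\kap L_{a^+}}[Fu_s]$ is continuous and the semigroup is norm-differentiable, the right-hand side of \eqref{eq:basic_weak} is differentiable in $t$; differentiating under the integral sign (again using boundedness of $L_{a^+}$, which turns the Leibniz rule into a routine estimate) yields $\frac{\partial u_t}{\partial t}=\kap L_{a^+}e^{t\kap L_{a^+}}u_0+[Fu_t]+\int_0^t \kap L_{a^+}e^{(t-s)\kap L_{a^+}}[Fu_s]\,ds=\kap L_{a^+}u_t+[Fu_t]$, and evaluating \eqref{eq:basic_weak} at $t=0$ gives $u_0(x,0)=u_0$, so $u$ solves \eqref{eq:basic}.

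Since this is the standard Duhamel equivalence in the benign setting of a bounded generator, there is no real obstacle; the only point requiring a word of care is justifying the differentiation of the convolution integral term-by-term and interchanging $\frac{d}{ds}$ with the semigroup, which is immediate here from $\|e^{t\kap L_{a^+}}\|\le e^{2t\kap\|a^+\|_{L^1}}$ (operator norm bound, $a^+$ a probability kernel so $\mu=1$) and the continuity of $s\mapsto Fu_s$. I would state these as one line each rather than grind through the $\eps$-estimates. One might alternatively invoke \cite[Theorem~1.4]{Paz1983} directly for uniqueness and then only verify that a classical solution is mild; but giving the short self-contained computation above is cleaner and keeps the lemma independent of which direction of \cite{Paz1983} one cites.
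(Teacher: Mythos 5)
Your proof is correct and is exactly the standard variation-of-constants argument that the paper implicitly invokes (the lemma is stated without proof, introduced only as ``the standard Duhamel's formula''), and your observation that the exponent $e^{-(t-s)\kap L_{a^+}}$ in \eqref{eq:basic_weak} is a sign typo for $e^{(t-s)\kap L_{a^+}}$ is right, as the first term $e^{\kap t L_{a^+}}u_0$ and the citation of \cite[Theorem~1.4]{Paz1983} both force. Nothing further is needed.
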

This equation has the Volterra form and can be used for the existence-uniqueness theory (see \cite{FKT2015}).
\begin{theorem}\label{thm:ex_un}
Let $u_0\in E$ be non-negative. Then, for each $T>0$, there exists a unique non-negative solution to \eqref{eq:basic} in $\xt$.
\end{theorem}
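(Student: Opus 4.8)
The plan is to upgrade the local solution furnished by standard semilinear theory to a global non-negative one, using the Feynman--Kac representation of Proposition~\ref{prop:F_K_formula_general} twice: first to establish non-negativity, and then once more---now exploiting that on non-negative functions the competition term in $F$ has a favourable sign---to exclude finite-time blow-up. Throughout I use that, by the preceding lemma, solving \eqref{eq:basic} in $\xt$ is the same as solving the Duhamel equation \eqref{eq:basic_weak}.

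First I would record local well-posedness. Since $a^-$ is a probability kernel, $a^-*h\in C_b(\X)$ whenever $h\in C_b(\X)$, so $F$ maps $C_b(\X)$ into itself; moreover $F$ is locally Lipschitz, as $\|Fh-Fg\|_\infty\le(\beta+2R\kam)\|h-g\|_\infty$ for $\|h\|_\infty,\|g\|_\infty\le R$, where $\beta=\kap-m$. Writing \eqref{eq:basic} in the form \eqref{eq:semilinear} with $L_J=\kap L_{a^+}$ and $V(u)=\beta-\kam(a^-*u)$, the theory of semilinear equations with bounded generator and locally Lipschitz nonlinearity (e.g.\ \cite[Theorem~1.4]{Paz1983}, exactly as invoked after \eqref{eq:loclip}) produces a maximal existence time $T_{\max}\le\infty$ and a unique mild---hence classical, $L_J$ being bounded---solution $u\in\xt$ for every $T<T_{\max}$, with the blow-up alternative: if $T_{\max}<\infty$ then $\|u(\cdot,t)\|_\infty\to\infty$ as $t\uparrow T_{\max}$. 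In particular uniqueness in $\xt$ is already settled, so it remains to prove $u\ge0$ and $T_{\max}=\infty$.

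For non-negativity, fix $T<T_{\max}$ and put $W(x,t):=\beta-\kam(a^-*u_t)(x)$. Then $W\in\xt$ and $u$ solves the linear Cauchy problem \eqref{eq:pert} with this potential and datum $u_0$, so Proposition~\ref{prop:F_K_formula_general} applies and gives
\[
u(x,t)=\E^x\Bigl[u_0(X_t)\exp\Bigl(\int_0^t W(X_{t-s},s)\,ds\Bigr)\Bigr],\qquad x\in\X,\ t\in[0,T].
\]
As $u_0\ge0$ and the exponential factor is strictly positive, $u\ge0$ on $[0,T]$; since $T<T_{\max}$ was arbitrary, $u\ge0$ on $[0,T_{\max})$.

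It remains to show $T_{\max}=\infty$. Once $u\ge0$ we have $a^-*u_t\ge0$, hence $W(x,t)\le\beta$, and the same Feynman--Kac formula yields
\[
0\le u(x,t)\le e^{\beta t}\,\E^x[u_0(X_t)]\le e^{\beta t}\|u_0\|_\infty,\qquad t\in[0,T_{\max}),
\]
where the last step uses $\E^x[u_0(X_t)]=\int_\X p(x-y,t)u_0(y)\,dy$ together with $\int_\X p(y,t)\,dy=1$. If $T_{\max}$ were finite, this bound, uniform on $[0,T_{\max}]$, would contradict the blow-up alternative; hence $T_{\max}=\infty$, and restricting to $[0,T]$ completes the proof. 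The only step needing a little care is the legitimacy of invoking Proposition~\ref{prop:F_K_formula_general} with the solution-dependent potential $W$: this is not circular because, for each fixed $T<T_{\max}$, the local theory already provides $u\in\xt$ and hence $W\in\xt$. Everything else is routine---the logistic nonlinearity being ``dissipative'' for non-negative data makes the a priori bound essentially automatic---and the result coincides in substance with the existence--uniqueness statement of \cite{FKT2015}.
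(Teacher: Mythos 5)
Your proof is correct. Note first that the paper itself gives no argument for this theorem: it only remarks that \eqref{eq:basic_weak} ``has the Volterra form'' and defers the existence--uniqueness theory to \cite{FKT2015}, where the result is obtained by working directly with the integral (Volterra) equation and comparison-type a priori bounds. Your route is a clean, self-contained alternative that stays entirely within the toolkit of Section~2: local well-posedness and uniqueness from the bounded-generator semilinear theory already invoked after \eqref{eq:loclip}; positivity by freezing the potential $W=\beta-\kam(a^-*u)$ and applying Proposition~\ref{prop:F_K_formula_general} (correctly noting this is not circular, since $u\in\xt$ is already known for $T<T_{\max}$, hence $W\in\xt$); and globality from the resulting a priori bound $0\le u(x,t)\le e^{\beta t}\lVert u_0\rVert_\infty$, which contradicts the blow-up alternative if $T_{\max}<\infty$. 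All the small verifications check out: $F$ is locally Lipschitz on $C_b(\X)$ because $\lVert a^-*v\rVert_\infty\le\lVert v\rVert_\infty$; the process generated by $\kap L_{a^+}$ is conservative, so $\E^x[u_0(X_t)]\le\lVert u_0\rVert_\infty$; and uniqueness among all (not just non-negative) solutions in $\xt$ follows from the Lipschitz theory, which is slightly stronger than what the statement asks. The one cosmetic point is that the exponential upper bound $e^{\beta t}\lVert u_0\rVert_\infty$ is far from sharp (the later comparison arguments of Section~3 give bounds by $\max\{\theta,\lVert u_0\rVert_\infty\}$ under \eqref{ass:aplus_geq_aminus}), but for ruling out finite-time blow-up it is all that is needed.
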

Now we will estimate solution to \eqref{eq:basic} from below. Let $u_0$ be a constant function $$u_0\equiv q_0\in(0,\theta).$$
Then the corresponding solution to \eqref{eq:basic} is the function
\begin{equation*}
	q_t=\dfrac{\theta}{1+e^{-\beta t}(\frac{\theta}{q_0}-1)}.
\end{equation*}
By Theorem \ref{thm:ex_un}, this solution is unique.
Let us fix $\kappa\in[0,\theta]$. We make the following assumption
\begin{assum}\label{ass:aplus_geq_aminus}
	 J_\k(x) := \kap a^+(x) - \k\kam a^-(x) \geq 0,\qquad x\in\X.
\end{assum}
\begin{theorem}
Let \eqref{ass:aplus_geq_aminus} hold with $\k=q_0\in(0,\theta)$. Suppose that 
\[
	u_0(x)\ge q_0,\quad x\in\X,
\]
where $u_0\in E$.  Then the corresponding to $u_0$ solution $u_t$ to \eqref{eq:basic} satisfies the following inequality
\begin{equation*}
	u_t(x)\ge q_t,\quad x\in\X,t>0.
\end{equation*}
\end{theorem}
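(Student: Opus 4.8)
The plan is to work with the difference $w_t:=u_t-q_t$ and to prove $w_t(x)\ge0$ for all $x\in\X$, $t>0$. The structural point is that $q_t$ is itself a solution of \eqref{eq:basic0}: since it is constant in $x$ we have $L_{a^+}q_t=0$ and $a^-*q_t=q_t$ (because $\int_\X a^-=1$), so along $q$ equation \eqref{eq:basic0} reduces to the logistic ODE \eqref{eq:logisticODE}. Subtracting the equations for $u$ and $q$ and using $a^-*q_t=q_t$ to linearise the competition term, $u_t(a^-*u_t)-q_t(a^-*q_t)=u_t(a^-*w_t)+q_tw_t$, one obtains the linear nonlocal equation
\[
\partial_tw_t=\kap(a^+*w_t)-\kam u_t(a^-*w_t)-(m+\kam q_t)w_t,\qquad w_0=u_0-q_0\ge0,
\]
whose coefficient $u_t$ is bounded and nonnegative by Theorem~\ref{thm:ex_un}. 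So everything reduces to showing that this equation preserves nonnegativity of the initial datum. The natural tool is the Feynman--Kac formula of Proposition~\ref{prop:F_K_formula} (with generator $\kap L_{a^+}=L_{\kap a^+}$ and $X$ the pure-jump process of jump rate $\kap$ and jump law $a^+$), which represents
\[
u_t(x)=\E^x\Bigl[u_0(X_t)e^{\int_0^t[(\kap-m)-\kam(a^-*u_s)(X_{t-s})]\,ds}\Bigr],\qquad q_t=q_0e^{\int_0^t[(\kap-m)-\kam q_s]\,ds},
\]
the second being the spatially constant specialisation of the first.

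To prove positivity I would run a minimum principle, first for the strict version $u_0\ge q_0+\eps$ (so that $w$ is strictly positive initially), showing that $\inf_x w_t(x)$ cannot return to $0$. At a time $t_0$ and point $x_0$ realising $w_{t_0}(x_0)=\inf_x w_{t_0}(x)=0$ one has $u_{t_0}(x_0)=q_{t_0}$, and the $w$-equation collapses to
\[
\partial_tw_{t_0}(x_0)=\bigl(J_{q_{t_0}}*w_{t_0}\bigr)(x_0)=\bigl(J_{q_0}*w_{t_0}\bigr)(x_0)-\kam(q_{t_0}-q_0)(a^-*w_{t_0})(x_0).
\]
Here $\bigl(J_{q_0}*w_{t_0}\bigr)(x_0)\ge0$ by Assumption~\eqref{ass:aplus_geq_aminus} with $\k=q_0$ together with $w_{t_0}\ge0$; this is exactly where the hypothesis enters. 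Assumption~\eqref{ass:aplus_geq_aminus} also yields $J_{q_t}^-:=\max(-J_{q_t},0)\le\kam(q_t-q_0)a^-$, hence $\lVert J_{q_t}^-\rVert_{L^1}\le\kam(q_t-q_0)$, which quantifies how far $J_{q_t}$ can fail to be nonnegative for $t>0$.

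The main obstacle is the remaining term $-\kam(q_{t_0}-q_0)(a^-*w_{t_0})(x_0)$: since $q_t$ increases from $q_0$ to $\theta$, $J_{q_t}$ itself need not be nonnegative, so the identity above is not yet manifestly $\ge0$ and the nonnegative dispersal contribution must be shown to dominate the competition defect. I expect this to be the heart of the argument. I would close it by a Gronwall estimate carried out over successive short intervals $[nT,(n+1)T]$ — on each of which $q_t-q_0$, measured from the left endpoint, is small and controlled by the bound above, in the spirit of the bootstrapping in the proof of Theorem~\ref{thm:stability_abstract} — and then remove the auxiliary $\eps>0$ by letting $\eps\downarrow0$ and using continuity of the maps $u_0\mapsto u$ and $q_0\mapsto q$. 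Equivalently, one may phrase the whole argument as a fixed-point iteration on the Feynman--Kac representation (starting from $u^{(0)}\equiv q$, for which the first iterate already satisfies $u^{(1)}_t(x)=\tfrac{q_t}{q_0}\E^x u_0(X_t)\ge q_t$) and use \eqref{ass:aplus_geq_aminus} to control $\kam(a^-*u^{(n)}_s)$ by $\kap(a^+*u^{(n)}_s)/q_0$ at each step. In all cases Assumption~\eqref{ass:aplus_geq_aminus} with $\k=q_0=\min_t q_t$ is precisely the condition making the dispersal kernel $\kap a^+$ dominate the competition kernel $q_t\kam a^-$ strongly enough, uniformly in $t$, for dispersal to keep $u_t$ at or above $q_t$; without it the comparison fails.
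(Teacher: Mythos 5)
Your reduction is the same as the paper's: pass to $w_t=u_t-q_t$, linearise the competition term using $a^-*q_t=q_t$, and reduce everything to showing that the resulting linear nonlocal equation preserves nonnegativity. The paper does exactly this (with $v_t=e^{Kt}w_t$, the large constant $K=\kam M+m$ absorbing the zero-order multiplication part, and the convolution part being $J_{q_t}*w$). So the setup is fine. The problem is that your proposal stops exactly at the step that carries all the content. You correctly observe that \eqref{ass:aplus_geq_aminus} with $\k=q_0$ only gives $J_{q_0}\ge0$, while the kernel actually appearing at time $t$ is $J_{q_t}=J_{q_0}-\kam(q_t-q_0)a^-$, and that $q_t$ \emph{increases} from $q_0$ to $\theta$, so $J_{q_t}$ need not be nonnegative. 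You then say you ``expect'' to close this by a Gronwall/bootstrap argument or by iterating the Feynman--Kac map, but neither is carried out, and neither works as described: a Gronwall bound of the form $\partial_t w\ge -\kam(q_t-q_0)(a^-*w)+(\text{nonneg.})$ started from $\inf_x w=0$ gives no sign information (Gronwall controls growth of a norm, not the sign of an infimum against a negative forcing), and the Feynman--Kac iteration is order-\emph{reversing} in $w$ (since $V$ is decreasing), so your observation that the first iterate satisfies $u^{(1)}_t\ge q_t$ does not propagate by induction. As written, the proposal is a plan with the decisive positivity step left open.

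For comparison: the paper closes this step with the single sentence ``Since $q_t\le q_0$ for $t\ge0$, \dots\ $[G_tw]$ is non-negative.'' That inequality is backwards — for $q_0\in(0,\theta)$ the logistic solution $q_t$ is increasing — so the difficulty you isolated is genuine and is in fact a gap in the paper's own argument, not something you overlooked. The step does go through verbatim if \eqref{ass:aplus_geq_aminus} is imposed with $\k=\theta$ (as it is in the subsequent theorems): since $J_\k$ is decreasing in $\k$ and $q_t\le\theta$, one then has $J_{q_t}\ge J_\theta\ge0$ for all $t$, the operator $G_t$ is positivity-preserving, and your minimum-principle/comparison argument closes immediately (at a touching point, $(J_{q_{t_0}}*u_{t_0})(x_0)\ge q_{t_0}\int_\X J_{q_{t_0}}=q_{t_0}(\kap-\kam q_{t_0})$, matching $q_{t_0}'+mq_{t_0}$). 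Under the hypothesis $\k=q_0$ alone, neither your sketch nor the paper's proof establishes the claim; if you want to salvage that weaker hypothesis you must produce an actual mechanism by which the dispersal term dominates the defect $\kam(q_t-q_0)(a^-*w_t)$, and at present there is none.
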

\begin{proof}
Let us fix $T>0$. Define  $v_t=e^{Kt}(u_t-q_t)$, $t\in[0,T]$, where $K$ will be defined
later. The function $v_t$ satisfies the following linear equation
\begin{align*}
	\dfrac{\partial v_t}{\partial t}(x)&=[G_{t}v_t](x) && x\in\X,\ t\in [0,T], \\
	v_0(x)&=u_0(x)-q_0 			&& x\in\X, 
\end{align*}
where, for all $w\in E$,
\begin{equation*}
	[G_{t}w]:=\kap (a^+*w)-\kam q_t(a^-*w)-\kam w(a^-*u_t)-mw+Kw.
\end{equation*}
By Theorem \ref{thm:ex_un}, there exists $M>0$, such that
\[
u_t(x)\le M,\quad x\in\X,\ t\in[0,T].
\]
Define $K:=\kam M+m$. Since $q_t \leq q_0$ for $t \geq 0$, we have, by \eqref{ass:aplus_geq_aminus} with $\k=q_0$, that $[G_tw](x)$ is non-negative for all $t\in [0,T]$ and for all non-negative $w\in E$.
Therefore,
\begin{equation*}
	v_t(x)=\exp\biggl(\int\limits_0^t G_s ds\biggr)v_0(x)\ge0,\quad x\in\X,\ t>0,
\end{equation*}
since $v_0$ is non-negative. Hence, $u_t(x)\ge q_t$, $x\in\X$, $t\in[0,T]$. Since $T$ is arbitrary, the same holds for any $t>0$.
\end{proof}
\begin{remark}[cf. {\cite[Proposition 3.4]{FKT2015}}]
	In a similar way, it can be shown that if \eqref{ass:aplus_geq_aminus} holds with $\k=\theta$, and $u_0\in E$ is such
that $0\le u_0(x)\le\theta$, $x\in\X$, then the corresponding
solution satisfies the following inequality
$$0\le u_t(x)\le\theta,\quad x\in\X,t>0.$$
\end{remark}

The following theorem shows, based on the Feynman--Kac formula, that $u_t$ satisfies another integral equation.
\begin{theorem}\label{thm:FK_for_BDLP}
	Let \eqref{ass:aplus_geq_aminus} holds with $\k=\theta$. 
	Suppose that $u\in\xt$, $T\in(0,\infty]$, is the solution to \eqref{eq:basic0} with an initial condition $u_0\in E$. Then $u$ satisfies the following formula, for all $x\in\X$, $t\in [0,T]$,
	\begin{equation*}
		u(x,t)= \theta + \E^x \biggl[\bigl(u_0(X_t)-\theta\bigr) \exp\Bigl(-\kam\int\limits_0^t\bigl( a^-*u_{t-s}\bigr)(X_s)ds\Bigr)\biggr].
	\end{equation*}
	where $X_0=x$.
\end{theorem}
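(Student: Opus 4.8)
The plan is to reduce \eqref{eq:basic0} to a \emph{linear} equation of the form \eqref{eq:pert} for the shifted unknown $w_t:=u_t-\theta$, with jump kernel $J_\theta$ and a time-dependent potential manufactured out of $u$ itself, and then to invoke Proposition~\ref{prop:F_K_formula_general}. First I would substitute $u=w+\theta$ into \eqref{eq:basic0}. Since $a^+$ is a probability density, $L_{a^+}$ kills constants, so $\kap L_{a^+}u=\kap L_{a^+}w$; expanding the other terms and using the two identities $\beta=\kap-m=\kam\theta$ (whence $\beta\theta=\kam\theta^2$ and $\beta w=\kam\theta w$), the constant terms cancel and the linear-in-$w$ terms that carry no convolution cancel as well, leaving
\[
\partial_t w = \kap L_{a^+}w - \kam\theta\,(a^-*w) - \kam w\,(a^-*w).
\]
Next I would recombine the first two terms: $\kap(a^+*w)-\kam\theta(a^-*w)-\kap w=(J_\theta*w)-\kap w=L_{J_\theta}w-\beta w$, because $\|J_\theta\|_{L^1}=\kap-\kam\theta=m$. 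Finally, $-\beta w-\kam w(a^-*w)=-\kam w\bigl(\theta+(a^-*w)\bigr)=-\kam w\,(a^-*u)$, so that $w$ solves
\[
\partial_t w_t(x)=(L_{J_\theta}w_t)(x)+W(x,t)\,w_t(x),\qquad W(x,t):=-\kam(a^-*u_t)(x),
\]
with $w_0=u_0-\theta\in C_b(\X)$, and $w$ inherits from $u$ the regularity needed to be a classical solution.

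At this point the hypotheses of Proposition~\ref{prop:F_K_formula_general} hold: assumption~\eqref{ass:aplus_geq_aminus} with $\k=\theta$ is precisely $J_\theta\ge0$, so $L_{J_\theta}$ is the generator of a jump Markov process $X_t$ with $X_0=x$, and $W\in\xt$ since $u\in\xt$ and $a^-\in L^1(\X)$ make $a^-*u_t$ bounded and jointly continuous in $(x,t)$. Applying \eqref{eq:FKtime} to $w$ gives
\[
w(x,t)=\E^x\Bigl[(u_0(X_t)-\theta)\exp\Bigl(\int_0^t W(X_{t-s},s)\,ds\Bigr)\Bigr];
\]
inserting the definition of $W$ and changing the integration variable $s\mapsto t-s$ turns $\int_0^t(a^-*u_s)(X_{t-s})\,ds$ into $\int_0^t(a^-*u_{t-s})(X_s)\,ds$, which yields the asserted identity for $u=w+\theta$. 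When $T=\infty$ one simply runs this argument on each finite subinterval $[0,T']$ and lets $T'\to\infty$, the formula at a fixed $t$ being unaffected.

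The only substantive step is the algebraic reduction in the first paragraph: one must check that the competition term $-\kam u(a^-*u)$ decomposes so that the extra linear pieces recombine with the diffusion operator to produce exactly the new kernel $J_\theta$ together with the potential $-\kam(a^-*u)$, with nothing left over. The positivity of $J_\theta$ guaranteed by \eqref{ass:aplus_geq_aminus} is what makes $L_{J_\theta}$ an admissible jump generator in the Feynman--Kac formula; everything after the reduction is a direct citation of Proposition~\ref{prop:F_K_formula_general} plus a routine change of variables, so I do not expect any genuine obstacle there.
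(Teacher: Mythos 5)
Your proof is correct and follows essentially the same route as the paper: shift by $\theta$, verify that the competition term recombines with $\kap L_{a^+}$ to produce $L_{J_\theta}$ plus the potential $-\kam(a^-*u)$, and apply the Feynman--Kac formula. The only (cosmetic) difference is that you invoke the linear Proposition~\ref{prop:F_K_formula_general} directly with $W=-\kam(a^-*u_t)$, whereas the paper applies the semilinear Proposition~\ref{prop:F_K_formula} with $[Vh]=-\kam(a^-*h)-\beta$ and cancels the $-\beta$ against $\kam\theta$ afterwards.
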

\begin{proof}
	Let us denote $g_t:=u_t-\theta$. If $u_t$ solves \eqref{eq:basic0}, then $g_t$ satisfies the following equation
\begin{equation}\label{eq:u-theta}
	\begin{cases}
		\dfrac{\partial g_{t}}{\partial t}(x)=[L_{J_\theta}g_{t}](x)-\kam g_ta^-*g_t-\beta g_t, & x\in\X,\ t\in I,\\
		g(x,0)=g_0(x)=u_0(x)-\theta, & x\in\X,
	\end{cases}
\end{equation}
where $L_{J_\theta}$ is defined by \eqref{eq:jumpgen}, for $J=J_\theta=\kap a^+ {-} \kam \theta a^- $.
We set
\begin{equation*}
	[Vh](x)=-\kam(a^-*h)(x)-\beta,\quad x\in\X, h\in E.
\end{equation*}
For such $V$ and the generator $L_J=L_{J_\theta}$ of the jump-process $X_t$, we apply Proposition \eqref{prop:F_K_formula} to the solution of \eqref{eq:u-theta}
	\begin{equation}\label{eq:FK_for_BDLP_g}
		g(x,t)=\E^x \biggl[g_0(X_t)\exp\Bigl(\int\limits_0^t[Vg_{t-s}](X_s)ds\Bigr)\biggr],\quad x\in\X,\ t\in [0,T].
	\end{equation}
Substituting $g_t=u_t-\theta$ into the previous representation completes the proof.
\end{proof}
The following theorem shows the asymptotic stability of the positive stationary solution.
\begin{theorem}\label{thm:BDLP_theta_exp_stable_by_FK}
	Let \eqref{ass:aplus_geq_aminus} holds with $\k=\theta$. Suppose that $u_0\in E$ is an initial condition to \eqref{eq:basic0}, such that
	\[ 
		c_1\leq u_0(x)\leq c_2,\quad x\in\X,
	\]
	where $0\leq c_1\leq \theta$ and $c_2 \geq \theta$.
	Then there exists a unique solution $u\in\xinf$ to \eqref{eq:basic0}. 
	Moreover, $u\in\xtbt{c_1}{c_2}{\infty}$, $\Vert u_t-\theta\Vert_E$ does not increase in time, and if $c_1>0$, then $\Vert u_t-\theta\Vert_E$ converges to zero exponentially fast, namely 
	\begin{equation*}
		\limsup_{t\rightarrow\infty}\dfrac{\ln\Vert u_t -\theta\Vert}{t}\leq -\beta.
	\end{equation*}
\end{theorem}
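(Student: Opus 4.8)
The plan is to obtain the theorem from Theorem~\ref{thm:stability_abstract}, applied to the equation \eqref{eq:u-theta} satisfied by $g_t:=u_t-\theta$. As computed in the proof of Theorem~\ref{thm:FK_for_BDLP}, $g$ solves an equation of the form \eqref{eq:semilinear} with generator $L_{J_\theta}$ and nonlinearity $[Vh](x)=-\kam(a^-*h)(x)-\beta$; Assumption~\eqref{ass:aplus_geq_aminus} with $\k=\theta$ is exactly the condition $J_\theta\ge0$ that makes $L_{J_\theta}$ an operator of the form \eqref{eq:jumpgen}, hence the generator of a jump process $X_t$ (with $\mu=\|J_\theta\|_{L^1}=\kap-\theta\kam=m>0$). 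I would first note that $V$ is bounded and globally Lipschitz, $\|Vh-Vg\|_\infty\le\kam\|h-g\|_\infty$, since $a^-$ is a probability density; this Lipschitz estimate (constant $\kam$) is the only thing the contraction argument in the proof of Theorem~\ref{thm:stability_abstract} uses from $V$, the pointwise bound on $Vf$ being supplied by the function $p$ below, so the extra additive constant $-\beta$ in $V$ causes no difficulty.

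Next I would take $p(k,l):=\kam k+\beta$ and verify the conditions in \eqref{eq:p_monotone}. For $f\in\xtb{k}{l}$ one has $k\le(a^-*f)(x,t)\le l$, hence $(Vf)(x,t)=-\kam(a^-*f)(x,t)-\beta\le-\kam k-\beta=-p(k,l)$; and since $k\le0$ and $\la\in[0,1]$ give $\la k\ge k$, one gets $p(\la k,\la l)=\kam\la k+\beta\ge\kam k+\beta=p(k,l)$. For $g_0=u_0-\theta$ one has $c:=c_1-\theta\le g_0\le c_2-\theta=:d$, and, using $\kam\theta=\beta$, $p(c,d)=\kam c_1$ while $p(0,0)=\beta$.

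The one step that requires care is that $p(k,l)=\kam k+\beta$ is nonnegative only on $\{k\ge-\theta\}$, whereas Theorem~\ref{thm:stability_abstract} is stated for $p:\R^2\to\R_+$. This is immaterial here: in the proof of that theorem $p$ is evaluated only at the points $(c_n,d_n)$ with $c_0=c=c_1-\theta$, where $\{c_n\}$ is nondecreasing and $\le0$, and the hypothesis $c_1\ge0$ (i.e. $c\ge-\theta$) keeps every $c_n$ in $[-\theta,0]$, where $p\ge0$ and all the estimates there go through unchanged. Thus Theorem~\ref{thm:stability_abstract} provides, for each $T>0$, a unique $g\in\x_T$ satisfying the Feynman--Kac formula \eqref{eq:FK_for_BDLP_g}, with $g\in\xtbt{c_1-\theta}{c_2-\theta}{\infty}$, $\|g_t\|_E$ nonincreasing in $t$, and — when $c_1>0$, so that $p(c,d)=\kam c_1>0$ — $\limsup_{t\to\infty}t^{-1}\ln\|g_t\|\le-p(0,0)=-\beta$.

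It then remains to identify $g$ with $u-\theta$. By Theorem~\ref{thm:ex_un} the non-negative datum $u_0$ gives a unique non-negative solution $u$ of \eqref{eq:basic} on each $[0,T]$, hence a unique $u\in\xinf$; since \eqref{ass:aplus_geq_aminus} holds with $\k=\theta$, Theorem~\ref{thm:FK_for_BDLP} shows $u-\theta$ satisfies \eqref{eq:FK_for_BDLP_g}, so by the uniqueness in Theorem~\ref{thm:stability_abstract} we get $u-\theta=g$. Rewriting through $u=\theta+g$ and $\|u_t-\theta\|_E=\|g_t\|_E$ yields $u\in\xtbt{c_1}{c_2}{\infty}$, the monotonicity of $\|u_t-\theta\|_E$, and the exponential rate $-\beta$. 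I expect no real obstacle here: the statement is essentially a corollary of Theorems~\ref{thm:stability_abstract}, \ref{thm:ex_un} and~\ref{thm:FK_for_BDLP}, the only non-mechanical point being the reconciliation, in the third paragraph, of the sign constraint on $p$ with the way $p$ enters the proof of Theorem~\ref{thm:stability_abstract}.
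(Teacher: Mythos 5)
Your proposal is correct and follows essentially the same route as the paper: the paper's proof is a two-line reduction to Theorem~\ref{thm:stability_abstract} via $g_t=u_t-\theta$ with $p(c,d)=\kam(\theta+c)$, which is exactly your $p(k,l)=\kam k+\beta$ since $\beta=\kam\theta$. Your third paragraph (checking that $p\ge0$ along the sequence $(c_n,d_n)$ even though $\kam k+\beta$ is not globally nonnegative) supplies a detail the paper silently skips over.
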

\begin{proof}
	We consider $g_t = u_t - \theta$. By Theorem~\ref{thm:ex_un} and \ref{thm:FK_for_BDLP}, there exists a unique solution $g\in\xinf$ to \eqref{eq:u-theta}, and this solution satisfies \eqref{eq:FK_for_BDLP_g}. 
	The rest of the proof follows from Theorem~\ref{thm:stability_abstract} with $p(c,d)=p(c)= \kam(\theta+c)$.
\end{proof}

\section{Stability on the initial condition}
We will be interested in initial conditions of the following
form
\begin{equation}\label{eq:incond}
	u_0(x,\lambda)=\theta e^{\lambda \xi(x)},
\end{equation}
where $\xi:\X\rightarrow \R$.
Since the operator $L_{a^+}$ is linear and bounded on $E$, and $F$ is analytic on $E$, then the solution $u$ to \eqref{eq:basic} depends analytically on the initial condition $u_0$ (see e.g. \cite[Theorem 3.4.4, Corollary 3.4.5, 3.4.6]{Hen1981}).
Hence, by \eqref{eq:incond}, the $E$-valued function $\lambda\mapsto u(\cdot,t,\lambda)$ is analytic on $\R$ for each $t\geq0$. Therefore, for all $\la\in\R$, it is given by the following series
\begin{equation}\label{eq:expan_in_la}
	u(\cdot ,t,\la)=\sum\limits_{n\ge0}\dfrac{\la^n}{n!}k_{n,t}(\cdot),
\end{equation}
where
\begin{equation*}
	k_{n,t}(\cdot) := \dfrac{\partial^n u}{\partial \la^n}(\cdot,t,0)\in E,\quad n\ge0.
\end{equation*}
We substitute \eqref{eq:expan_in_la} in \eqref{eq:basic_weak}.
\begin{align*}
	\sum\limits_{n\ge0}\dfrac{\la^n}{n!}k_{n,t}&=e^{\kap tL_{a^+}}\sum\limits_{n\ge0}\dfrac{\theta\la^n\xi^n}{n!} \\
	& \qquad +\int\limits_0^te^{-(t-s)\kap L_{a^+}}[F\sum\limits_{n\ge0}\dfrac{\la^n}{n!}k_{n,s}]ds.  
\end{align*}
Hence, the $n$-th Taylor coefficient satisfies the following equation
\begin{multline*}
	k_{n,t}=\theta [e^{\kap tL_{a^+}}\xi^n] +\int\limits_0^te^{-(t-s)\kap L_{a^+}}\Bigl((\kap -m)k_{n,s} \\
	  -\kam \sum\limits_{l=0}^n \binom{n}{l}k_{l,s}(a^-*k_{n-l,s})\Bigr)ds,\qquad n\ge0. 
\end{multline*}
Therefore,
\begin{align}
	\dfrac{\partial k_{n,t}}{\partial t}(x)&=\kap [L_{a^{+}}k_{n,t}](x)+(\kap-m)k_{n,t}(x) \nonumber \\
	 &\quad-\kam\sum\limits_{l=0}^n \binom{n}{l}k_{l,t}(x)(a^-*k_{n-l,t})(x), && x\in\X,\ t\in I, \label{eq:kn}
\end{align}
where $k_{n,0}(x)=\theta\xi^n(x)$.
\begin{theorem}\label{thm:BDLP_theta_exp_stable_anal_dec}
	Let $\xi \in E$ and $\gamma = \kap - \|J_\theta\|_{L^1} > 0$. 
	Then the following estimate holds
	\begin{equation*}
		\|u_t(\cdot,\la)-\theta\|_E \le \theta e^{-\gamma t}\left(\dfrac{\gamma}{2\beta}-\sqrt{\frac{\gamma^2}{4\beta^2}-\bigl(e^{|\la|\, \|\xi\|_E} -1 \bigr)\frac{\gamma}{\beta}}\right),
	\end{equation*}
	if only $|\la| < \dfrac{1}{\|\xi\|_E} \ln\Bigl(\dfrac{\gamma}{4\beta}+1\Bigr)$.
\end{theorem}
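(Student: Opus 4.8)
The plan is to bound the Taylor coefficients $k_{n,t}$ of the (analytic in $\la$) solution one at a time and then resum the series. First, since $u_0(\cdot,0)\equiv\theta$, uniqueness (Theorem~\ref{thm:ex_un}) forces $k_{0,t}\equiv\theta$, so that $u_t(\cdot,\la)-\theta=\sum_{n\geq1}\frac{\la^n}{n!}k_{n,t}$ and $\|u_t(\cdot,\la)-\theta\|_E\leq\sum_{n\geq1}\frac{|\la|^n}{n!}\|k_{n,t}\|_E$. Next I would rewrite \eqref{eq:kn}: extracting the $l=0$ and $l=n$ terms of the competition sum (which equal $\theta(a^-*k_{n,t})$ and $\theta k_{n,t}$ since $\int a^{-}=1$) and using $\kam\theta=\beta=\kap-m$, all zeroth-order terms in $k_{n,t}$ cancel, leaving for $n\geq1$
\[
\frac{\partial k_{n,t}}{\partial t}=J_\theta*k_{n,t}-\kap k_{n,t}-\kam\sum_{l=1}^{n-1}\binom{n}{l}k_{l,t}\,(a^-*k_{n-l,t}),\qquad k_{n,0}=\theta\xi^n .
\]
Even though $J_\theta=\kap a^+-\kam\theta a^-$ need not be non-negative here, the bounded operator $A_\theta v:=J_\theta*v-\kap v$ still satisfies $\|e^{tA_\theta}\|_{E\to E}\leq e^{-\gamma t}$ with $\gamma=\kap-\|J_\theta\|_{L^1}$, because $\|J_\theta*v\|_E\leq\|J_\theta\|_{L^1}\|v\|_E$.

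Second, Duhamel's formula for this linear equation, together with $\|e^{tA_\theta}\|\leq e^{-\gamma t}$, $\|a^-*v\|_E\leq\|v\|_E$ and $\|\xi^n\|_E=\|\xi\|_E^{n}$, yields, writing $b_n(t):=\|k_{n,t}\|_E$ and $\rho:=\|\xi\|_E$,
\[
b_n(t)\leq\theta e^{-\gamma t}\rho^n+\kam\int_0^t e^{-\gamma(t-s)}\sum_{l=1}^{n-1}\binom{n}{l}b_l(s)b_{n-l}(s)\,ds,\qquad n\geq1 .
\]
Now define $c_n\geq0$ by $c_1=\rho$ and $c_n=\rho^n+\frac{\beta}{\gamma}\sum_{l=1}^{n-1}\binom{n}{l}c_l c_{n-l}$ for $n\geq2$. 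I would prove $b_n(t)\leq\theta e^{-\gamma t}c_n$ for all $n\geq1$, $t\geq0$, by induction on $n$: the case $n=1$ is immediate (empty sum), and the inductive step uses $b_l(s)b_{n-l}(s)\leq\theta^2 e^{-2\gamma s}c_l c_{n-l}$, the elementary inequality $\int_0^t e^{-\gamma(t-s)}e^{-2\gamma s}\,ds\leq\gamma^{-1}e^{-\gamma t}$, and $\kam\theta=\beta$.

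Third, I would resum through the exponential generating function $G(z):=\sum_{n\geq1}\frac{z^n}{n!}c_n$; the binomial convolution in the recursion becomes $G(z)^2$, so $G$ solves $\frac{\beta}{\gamma}G(z)^2-G(z)+(e^{z\rho}-1)=0$, and the branch with $G(0)=0$ is
\[
G(z)=\frac{\gamma}{2\beta}\Bigl(1-\sqrt{1-\tfrac{4\beta}{\gamma}\bigl(e^{z\rho}-1\bigr)}\Bigr),
\]
analytic on $\{|z|<z_*\}$ with $z_*:=\rho^{-1}\ln\bigl(1+\tfrac{\gamma}{4\beta}\bigr)$ (the nearest branch point of the square root), so its power series converges there. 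Hence for $|\la|<z_*=\|\xi\|_E^{-1}\ln(\tfrac{\gamma}{4\beta}+1)$ we may interchange sum and norm:
\[
\|u_t(\cdot,\la)-\theta\|_E\leq\sum_{n\geq1}\frac{|\la|^n}{n!}b_n(t)\leq\theta e^{-\gamma t}\sum_{n\geq1}\frac{|\la|^n}{n!}c_n=\theta e^{-\gamma t}G(|\la|),
\]
and since $\tfrac{\gamma}{2\beta}\sqrt{1-\tfrac{4\beta}{\gamma}(e^{|\la|\rho}-1)}=\sqrt{\tfrac{\gamma^2}{4\beta^2}-(e^{|\la|\rho}-1)\tfrac{\gamma}{\beta}}$, this is precisely the asserted estimate.

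The main points requiring care rather than cleverness are two. One is checking that $\kap-m=\kam\theta$ genuinely collapses the linear part of \eqref{eq:kn} to $A_\theta$, so that the decay rate is the correct $\gamma=\kap-\|J_\theta\|_{L^1}$. The other is the termwise resummation, which is legitimate exactly because the bound $b_n(t)\leq\theta e^{-\gamma t}c_n$ is uniform in $t$ while $\sum_n\frac{c_n}{n!}z^n$ has radius of convergence $z_*$; this is the origin of the hypothesis on $|\la|$. The generating-function identity and solving the quadratic are then routine calculations.
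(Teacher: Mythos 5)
Your proposal is correct and follows essentially the same route as the paper: identify $k_{0,t}\equiv\theta$, bound the Taylor coefficients $\|k_{n,t}\|_E$ inductively via the mild form of \eqref{eq:kn} to obtain the recursion $C_n=1+\frac{\beta}{\gamma}\sum_{l=1}^{n-1}\binom{n}{l}C_lC_{n-l}$ (your $c_n$ is just $C_n\|\xi\|_E^n$), then resum with the generating function solving $\frac{\beta}{\gamma}H^2-H+(e^x-1)=0$ and pick the branch vanishing at $0$. Your explicit cancellation of the $l=0$ and $l=n$ terms and the remark that $\|e^{t(J_\theta*\cdot-\kap\cdot)}\|\le e^{-\gamma t}$ without any sign assumption on $J_\theta$ are points the paper leaves implicit, but the argument is the same.
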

\begin{proof}
We will estimate $k_n$, $n\ge 0$.
By \eqref{eq:kn}, $k_0\equiv\theta$. The function $k_1$ satisfies the following equation
\begin{equation*}
	\dfrac{\partial k_{1,t}}{\partial t}(x) = \bigl(J_{\theta}*k_{1,t}\bigr)(x)-\kap k_{1,t}(x),\quad x\in\X,\ t\in I,  
\end{equation*}
where $k_{1,0}(x)=\theta\xi(x)$.
Since $\gamma = \kap - \|J\|_{L^1}$, we have
\begin{equation}\label{eq:k1_est}
	k_{1,t}(x) \leq e^{-\gamma t} \|k_{1,0}\|_E = \theta e^{-\gamma t} \|\xi\|_E,\quad x\in\X,\ t\geq0.
\end{equation}
Suppose that
\begin{equation*}
	\|k_{l,t}\|_E\le C_l\theta e^{-\gamma t}\|\xi\|_E^l,\quad t\in I,\ 1\le l\le n-1,
\end{equation*}
where $C_l$ is a positive constant. (Note that, by \eqref{eq:k1_est}, $C_1=1$).
Estimate $k_n$.  By~the mild form of \eqref{eq:kn}, the following inequality holds
\begin{align*}
	\|k_{n,t}\|_E&\le e^{-\gamma t}\|k_{n,0}\|_E+\kam\int\limits_0^te^{-\gamma (t-s)}\sum\limits_{l=1}^{n-1} \binom{n}{l}\|k_{l,s}\|_E\|k_{n-l,s}\|_Eds\\
		&\le e^{-\gamma t} \|k_{n,0}\|_E + \kam\int\limits_0^t e^{-\gamma (t+s)} \sum\limits_{l=1}^{n-1} \binom{n}{l}C_lC_{n-l} \theta^2\|\xi\|_E^nds \nonumber \\
		&\le \left( 1+ \frac{\beta}{\gamma}\sum\limits_{l=1}^{n-1} \binom{n}{l}C_lC_{n-l} \right)\theta\|\xi\|_E^ne^{-\gamma t}. \nonumber
\end{align*}
Therefore,
by induction,
\begin{equation}\label{eq:kn_estim}
	\|k_{n,t}\|_E\le\theta C_{n}\|\xi\|_E^ne^{-\gamma t},\quad n\ge1,
\end{equation}
where
\begin{equation}\label{eq:Cn}
	C_n=\left( 1+ \frac{\beta}{\gamma} \sum\limits_{l=1}^{n-1} \binom{n}{l}C_lC_{n-l} \right),\quad C_1=1.
\end{equation}
Put $C_0=0$. Consider the following generating function:
\begin{equation*}
	H(x):=\sum\limits_{n\ge0}\dfrac{C_n}{n!}x^n.
\end{equation*}
By \eqref{eq:Cn}, $H$ satisfies the following equation:
\begin{equation*}
	H(x) = e^x-1 + \frac{\beta}{\gamma} H^2(x).
\end{equation*}
Since $H(0)=C_0=0$ and the function $z\ra\sqrt{1-z}$ is analytic for $|z|<1$, one has 
\begin{equation}\label{eq:genf}
	H(x)=\frac{\gamma}{2\beta}-\sqrt{\frac{\gamma^2}{4\beta^2}-(e^x-1)\frac{\gamma}{\beta}},\qquad x < \ln\bigl( \frac{\gamma}{4\beta}+1\bigr).
\end{equation}
Therefore, \eqref{eq:expan_in_la}, \eqref{eq:kn_estim} and \eqref{eq:genf}, we have
\begin{align*}
	\|u_t(\cdot,\la)-\theta\|_E&\le\sum\limits_{n\ge1}\theta C_n\dfrac{|\la|^n\|\xi\|_E^n}{n!}e^{-\gamma t}\\
		&=\theta H(|\la| \|\xi\|_E)e^{-\gamma t},\qquad |\la|\|\xi\|_E < \ln(\dfrac{\gamma}{4\beta}+1).
\end{align*}
This proves the theorem.
\end{proof}

\begin{remark}
	Note that the estimate $|\la| \|\xi\|_E < \ln\Bigl(\dfrac{\gamma}{4\beta}+1\Bigr)$ holds if and only if the initial condition satisfies 
\[
	\theta e^{-\bigl( \frac{\gamma}{4\beta}+1 \bigr)} < u_0(x) < \theta e^{ \bigl(\frac{\gamma}{4\beta}+1\bigr)}.
\]
\end{remark}
\begin{corollary}
	Let $\xi :\X\times\Omega \rightarrow \R$ be a random field. Under the assumptions of Theorem~\ref{thm:BDLP_theta_exp_stable_anal_dec}, the following estimate holds
	\begin{equation}\label{eq:sec_mom_est}
		\E \|u_t(\om,\la) - \theta\|_E^2 \le \theta^2 e^{-2\gamma t} \left(\dfrac{\gamma}{2\beta} - \E \sqrt{\frac{\gamma^2}{4\beta^2} - \bigl(e^{|\la| \|\xi(\om)\|_E}-1\bigr)\dfrac{\gamma}{\beta}}\right)^2,
	\end{equation}
	where $\sup\limits_{\om\in\Omega} |\la| \|\xi(\om)\|_E < \ln\Bigl(\dfrac{\gamma}{4\beta}+1\Bigr)$.
\end{corollary}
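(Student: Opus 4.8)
The plan is to derive the bound for each fixed realisation $\om\in\Omega$ from Theorem~\ref{thm:BDLP_theta_exp_stable_anal_dec} and then integrate over~$\Omega$. Fix $\om\in\Omega$ and regard $\xi(\cdot,\om)\in E$ as a deterministic profile in~\eqref{eq:incond}. Since, by hypothesis, $\sup_{\om\in\Omega}|\la|\,\|\xi(\om)\|_E<\ln\bigl(\frac{\gamma}{4\beta}+1\bigr)$, the assumption of Theorem~\ref{thm:BDLP_theta_exp_stable_anal_dec} is met for \emph{every}~$\om$, and writing $H$ for the generating function of~\eqref{eq:genf} we obtain
\[
\|u_t(\om,\la)-\theta\|_E\le\theta e^{-\gamma t}\,H\bigl(|\la|\,\|\xi(\om)\|_E\bigr),\qquad\om\in\Omega,
\]
with a nonnegative right-hand side. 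Before integrating I would check that $\om\mapsto\|u_t(\om,\la)-\theta\|_E$ is measurable; this follows from the (analytic, hence continuous) dependence of $u_t$ on the initial datum recalled just before~\eqref{eq:expan_in_la}, together with the observation that the supremum over $x\in\X$ defining $\|\cdot\|_E$ may be taken along a countable dense subset of~$\X$.

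Next I would square the displayed inequality (both sides are $\ge0$) and take the expectation in~$\om$. All the expectations that arise are finite, because $|\la|\,\|\xi(\om)\|_E$ stays uniformly below the singularity of~$H$, so that $H\bigl(|\la|\,\|\xi(\om)\|_E\bigr)$ is a bounded random variable; this already gives
\[
\E\,\|u_t(\om,\la)-\theta\|_E^2\le\theta^2e^{-2\gamma t}\,\E\Bigl[H\bigl(|\la|\,\|\xi(\om)\|_E\bigr)^2\Bigr].
\]
It then remains to bring the right-hand side into the form stated. Here I would use that, by~\eqref{eq:genf}, $H\bigl(|\la|\,\|\xi(\om)\|_E\bigr)=\frac{\gamma}{2\beta}-\sqrt{\frac{\gamma^2}{4\beta^2}-(e^{|\la|\,\|\xi(\om)\|_E}-1)\frac{\gamma}{\beta}}$, whence $\E H\bigl(|\la|\,\|\xi(\om)\|_E\bigr)=\frac{\gamma}{2\beta}-\E\sqrt{\frac{\gamma^2}{4\beta^2}-(e^{|\la|\,\|\xi(\om)\|_E}-1)\frac{\gamma}{\beta}}$, and then invoke Jensen's inequality (concavity of the square root occurring in~$H$, equivalently convexity of~$H$ in its argument) to relate $\E[H(\cdot)^2]$ to the square of this first moment.

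The step I expect to be the main obstacle is precisely this last passage, from the second moment $\E[H(\cdot)^2]$ produced by squaring-and-integrating to the square $(\E H(\cdot))^2$ appearing on the right: the two differ by the variance of $\sqrt{\frac{\gamma^2}{4\beta^2}-(e^{|\la|\,\|\xi(\om)\|_E}-1)\frac{\gamma}{\beta}}$, so one must be careful about the direction in which Jensen is applied — and, if it is genuinely the second moment $\E[\|u_t(\om,\la)-\theta\|_E^2]$ that is intended, it is cleanest to keep the square inside the expectation and write the bound as $\theta^2e^{-2\gamma t}\,\E\bigl[\bigl(\frac{\gamma}{2\beta}-\sqrt{\frac{\gamma^2}{4\beta^2}-(e^{|\la|\,\|\xi(\om)\|_E}-1)\frac{\gamma}{\beta}}\bigr)^2\bigr]$, the displayed estimate then being its consequence after a further application of Jensen. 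Everything else is routine: the measurability of $\om\mapsto\|u_t(\om,\la)-\theta\|_E$ and the finiteness of all the expectations both follow immediately from the uniform control on $|\la|\,\|\xi(\om)\|_E$.
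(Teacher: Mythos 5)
Your overall strategy --- apply Theorem~\ref{thm:BDLP_theta_exp_stable_anal_dec} pathwise for each fixed $\om$ and then integrate over $\Omega$ --- is the natural one (the paper states this corollary without proof, so this is presumably the intended route), and your preliminary remarks on the measurability of $\om\mapsto\|u_t(\om,\la)-\theta\|_E$ and on the boundedness of $H\bigl(|\la|\,\|\xi(\om)\|_E\bigr)$ under the uniform hypothesis are fine. The genuine gap is the final step. Squaring the pathwise bound and integrating gives
\begin{equation*}
\E\,\|u_t(\om,\la)-\theta\|_E^2\le\theta^2e^{-2\gamma t}\,\E\Bigl[H\bigl(|\la|\,\|\xi(\om)\|_E\bigr)^2\Bigr],
\end{equation*}
whereas Jensen (equivalently Cauchy--Schwarz) gives $\bigl(\E H\bigr)^2\le\E\bigl[H^2\bigr]$: the quantity $\bigl(\tfrac{\gamma}{2\beta}-\E\sqrt{\cdots}\bigr)^2=\bigl(\E H\bigr)^2$ appearing on the right of \eqref{eq:sec_mom_est} is \emph{smaller} than the bound you actually derived. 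Your closing claim that the displayed estimate is a ``consequence after a further application of Jensen'' therefore runs Jensen in the wrong direction; there is no passage from an upper bound by $\E[H^2]$ to an upper bound by $(\E H)^2$, the two differing by the variance term you yourself identify.

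What your argument legitimately yields is one of the two statements
\begin{equation*}
\E\,\|u_t(\om,\la)-\theta\|_E^2\le\theta^2e^{-2\gamma t}\,\E\left[\Bigl(\dfrac{\gamma}{2\beta}-\sqrt{\tfrac{\gamma^2}{4\beta^2}-\bigl(e^{|\la|\,\|\xi(\om)\|_E}-1\bigr)\tfrac{\gamma}{\beta}}\Bigr)^2\right]
\end{equation*}
(expectation outside the square), or
\begin{equation*}
\bigl(\E\,\|u_t(\om,\la)-\theta\|_E\bigr)^2\le\theta^2e^{-2\gamma t}\left(\dfrac{\gamma}{2\beta}-\E\sqrt{\tfrac{\gamma^2}{4\beta^2}-\bigl(e^{|\la|\,\|\xi(\om)\|_E}-1\bigr)\tfrac{\gamma}{\beta}}\right)^2
\end{equation*}
(first moment, then squared). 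The printed inequality \eqref{eq:sec_mom_est} mixes these two placements of the expectation and does not follow from the pathwise bound as it stands; it is almost certainly a misprint for one of the two forms above. You diagnosed the obstruction correctly but then asserted it away rather than resolving it --- the honest conclusion of your argument is one of the corrected statements, not the one displayed.
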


We apply now the general results to the specific case of the random initial data and try to estimate the rate of convergence using $L^2$-norm over a probability space.
Let us denote, for any  $f\in L^1(\X)$, its Fourier transform by 
\[
\widehat{f}(\lambda)=\intred e^{-i\la x}f(x)dx, \quad \la\in\X.
\]
Let $\tilde{p}_t(x)$ be a transition probability density for the jump process with the generator $L_J$ for $J=J_\theta$ (see \eqref{eq:jumpgen} and \eqref{ass:aplus_geq_aminus}).
Introduce the following assumption%
\begin{assum}\label{ass:J_bounded}
	J_\theta \textit{ is bounded}.
\end{assum}
Assumption \eqref{ass:J_bounded}  is a sufficient condition to have $\tilde{p}_t -\delta \in L^2(\X)\cap L^{\infty}(\X)$, $t\ge 0$.
The following theorem improves the estimate \eqref{eq:sec_mom_est}, when $J_\theta$ is non-negative.
\begin{theorem}\label{thm:random_field}
	Let \eqref{ass:aplus_geq_aminus} holds with $\k = \theta$.
	Let $\xi(x,\om)$ be a homogeneous random field with the following correlation function
	\[
		B(x-y)=\E\xi(x)\xi(y),\quad x,y\in\X.
	\]
	Suppose that $B\in L^1(\X)$ and its Fourier transform $\widehat{B}$ satisfies the following asymptotic
	\begin{equation*}
		\widehat{B}(\lambda) \sim \dfrac{a}{|\lambda|^{\alpha}},\quad \lambda\rightarrow0,
	\end{equation*}
	where $\alpha\in(0,d]$, $a>0$.
	Suppose, moreover, that the function $\widehat{J}$ is such that the following estimate
	\begin{equation*}
		\widehat{J}(\lambda)=1-b|\lambda|^{\beta}+o(|\lambda|^{\beta}),\quad \lambda\rightarrow0,
	\end{equation*}
	where $\beta\in(0,2]$, $b>0$, and let the function $x\rightarrow\sup\limits_{|\lambda|\leq x}\widehat{J}(\lambda)$ be monotonically decreasing in a neighborhood of 0.
	Then the following inequality holds
	\begin{equation*}
		\E k_{1,t}^2 \le \theta^2e^{-2\beta t}(D_1t^{\frac{\alpha-d}{\beta}}+D_2e^{-2\Delta t}),
	\end{equation*}
	where $D_1$, $D_2$, $\Delta$ are some fixed positive constants.
\end{theorem}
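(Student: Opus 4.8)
The plan is to linearise the problem at $k_{1,t}$ and then run a Fourier--space estimate. Taking $n=1$ in \eqref{eq:kn}, the coefficient $k_{1,t}$ solves $\partial_t k_{1,t}=(J_\theta*k_{1,t})-\kap k_{1,t}$ with $k_{1,0}=\theta\xi$; under \eqref{ass:aplus_geq_aminus} with $\k=\theta$ one has $J_\theta\ge0$ and $\|J_\theta\|_{L^1}=\kap-\theta\kam=m$, so this reads $\partial_t k_{1,t}=L_{J_\theta}k_{1,t}-\beta k_{1,t}$, whence $k_{1,t}=\theta e^{-\beta t}(\tilde p_t*\xi)$ with $\tilde p_t$ the transition density from \eqref{ass:J_bounded}. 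Using Fubini, homogeneity of $\xi$, and the spectral representation $B(w)=(2\pi)^{-d}\int_\X\widehat B(\lambda)e^{i\lambda w}\,d\lambda$, I would compute the (by homogeneity $x$-independent) second moment
\begin{equation*}
\E k_{1,t}(x)^2=\theta^2e^{-2\beta t}\iint_{\X\times\X}\tilde p_t(x{-}y)\tilde p_t(x{-}z)B(y{-}z)\,dy\,dz=\frac{\theta^2e^{-2\beta t}}{(2\pi)^d}\int_\X\widehat B(\lambda)\bigl|\widehat{\tilde p_t}(\lambda)\bigr|^2d\lambda,
\end{equation*}
where $\widehat{\tilde p_t}(\lambda)=e^{-t\psi(\lambda)}$ with $\psi(\lambda):=\widehat{J_\theta}(0)-\widehat{J_\theta}(\lambda)$; since $J_\theta\ge0$ we have $|\widehat{J_\theta}(\lambda)|\le\widehat{J_\theta}(0)$, so $\operatorname{Re}\psi\ge0$ and $|\widehat{\tilde p_t}(\lambda)|^2=e^{-2t\operatorname{Re}\psi(\lambda)}\le1$.

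Next I would split $\int_\X=\int_{|\lambda|\le\rho}+\int_{|\lambda|>\rho}$ for a fixed small $\rho>0$. On $\{|\lambda|\le\rho\}$ the low--frequency asymptotics of $\widehat J$ give $\operatorname{Re}\psi(\lambda)\ge c|\lambda|^\beta$ and those of $\widehat B$ give $\widehat B(\lambda)\le C|\lambda|^{-\alpha}$, so the self-similar substitution $\lambda=(2ct)^{-1/\beta}\eta$ gives
\begin{equation*}
\int_{|\lambda|\le\rho}\widehat B(\lambda)\bigl|\widehat{\tilde p_t}(\lambda)\bigr|^2d\lambda\le C\int_\X|\lambda|^{-\alpha}e^{-2ct|\lambda|^\beta}\,d\lambda=C(2ct)^{\frac{\alpha-d}{\beta}}\int_\X|\eta|^{-\alpha}e^{-|\eta|^\beta}\,d\eta,
\end{equation*}
the last integral being finite because $\alpha<d$ takes care of the origin and the exponential of infinity; this yields the term $D_1t^{(\alpha-d)/\beta}$. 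On $\{|\lambda|>\rho\}$ I would establish a uniform spectral gap $\operatorname{Re}\psi(\lambda)\ge\Delta>0$: for large $|\lambda|$ this is the Riemann--Lebesgue lemma applied to $J_\theta\in L^1(\X)$, and on a bounded annulus $\rho\le|\lambda|\le R$ it follows from continuity of $\widehat{J_\theta}$ together with the hypothesis that $x\mapsto\sup_{|\lambda|\le x}\widehat J(\lambda)$ is monotone near $0$, which forbids $\operatorname{Re}\widehat{J_\theta}$ from re-attaining the value $\widehat{J_\theta}(0)$ away from the origin; \eqref{ass:J_bounded} moreover secures $\tilde p_t-\delta\in L^2(\X)\cap L^\infty(\X)$ and the finiteness of the integrals involved. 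Then $|\widehat{\tilde p_t}(\lambda)|^2\le e^{-2\Delta t}$ on that set, so that part is bounded by $e^{-2\Delta t}\int_{|\lambda|>\rho}\widehat B(\lambda)\,d\lambda$. Adding the two contributions and absorbing $(2\pi)^{-d}$ into the constants gives $\E k_{1,t}^2\le\theta^2e^{-2\beta t}\bigl(D_1t^{(\alpha-d)/\beta}+D_2e^{-2\Delta t}\bigr)$.

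The linearisation, the Plancherel computation of the second moment, and the rescaling of the low-frequency integral are routine. The hard part is the last step away from the origin: producing a genuine gap $\operatorname{Re}\psi(\lambda)\ge\Delta>0$ \emph{uniformly} for $|\lambda|\ge\rho$. The tail $|\lambda|\to\infty$ is immediate from Riemann--Lebesgue, but on bounded annuli one must rule out that $\operatorname{Re}\widehat{J_\theta}$ returns to its maximal value $\widehat{J_\theta}(0)$ — this is precisely what the monotonicity hypothesis on $x\mapsto\sup_{|\lambda|\le x}\widehat J(\lambda)$ is designed to exclude — and this has to be combined with continuity and compactness to close the argument.
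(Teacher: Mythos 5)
Your proof follows essentially the same route as the paper's: solve the linear equation for $k_{1,t}$ to get $k_{1,t}=\theta e^{-\beta t}(\tilde p_t*\xi)$, pass to Fourier variables via the correlation function and Parseval, and split the spectral integral into a ball around the origin (where the asymptotics of $\widehat B$ and $\widehat J$ plus a self-similar rescaling give $D_1 t^{(\alpha-d)/\beta}$) and its complement (where a uniform spectral gap gives $D_2e^{-2\Delta t}$). Your write-up is in fact slightly more careful than the paper's on the two delicate points — using $|\widehat{\tilde p_t}|^2$ rather than $(\widehat{\tilde p_t})^2$ and actually arguing (via Riemann--Lebesgue plus compactness and the monotonicity hypothesis) why the gap $\Delta>0$ exists, which the paper merely asserts — so this is a correct proof in the same spirit.
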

\begin{proof}
By assumptions of the theorem, $\tilde{p}_t*B\in L^1(\X)\cap L^{2}(\X)$. Therefore,
\begin{align*}
	\E k_{1,t}^2&=\theta^2e^{-2\beta t}\E \intred \tilde{p}_t(y)\xi(y)dy\intred \tilde{p}_t(z)\xi(z)dz\\
		&=\theta^2e^{-2\beta t}\intred \tilde{p}_t(y)\tilde{p}_t(z)\E \xi(z)\xi(y)dydz =\theta^2e^{-2\beta t}\intred \tilde{p}_t(y)\tilde{p}_t(z)B(y-z)dydz \\
		&=\theta^2e^{-2\beta t}\intred \widehat{\tilde{p}_t}(\lambda) \widehat{(\tilde{p}_t*B)}(\lambda)d\lambda =\theta^2e^{-2\beta t}\intred \left(\widehat{\tilde{p}_t}(\lambda)\right)^2 \widehat{B}(\lambda)d\lambda \nonumber \\
&=\theta^2e^{-2\beta t}\intred e^{2(\widehat{J}(\lambda)-1)t}\widehat{B}(\lambda)d\lambda,
\end{align*}
where Parseval's theorem were used.

Therefore, by assumption on $\widehat{B}$ and $\widehat{J}$ there exist $\varepsilon>0$, $\delta>0$ and $\Delta>0$ such that
\begin{align*}
	\intred e^{2(\widehat{J}(\lambda)-1)t}\widehat{B}(\lambda)d\lambda&\leq \int\limits_{B_{\delta}(0)}\dfrac{a(1+\varepsilon)}{|\lambda|^{\alpha}}e^{-2(b-\varepsilon)|\lambda|^{\beta}t}d\lambda+\int\limits_{\X\backslash B_{\delta}(0)}e^{-2\Delta t}\widehat{B}(\lambda)d\lambda  \\
		&\leq D_1t^{\frac{\alpha-d}{\beta}}+D_2e^{-2\Delta t},
\end{align*}
where $D_1$ and $D_2$ are some constants, that yields the statement.
\end{proof}

\section*{Acknowledgments}
Financial supports by the DFG through CRC 701, Research Group ``Stochastic
Dynamics: Mathematical Theory and Applications'', and by the European
Commission under the project STREVCOMS PIRSES-2013-612669 are gratefully acknowledged.

\bibliographystyle{is-abbrv}

\begin{thebibliography}{99}
	\bibitem{BP1997} B.~Bolker and S.~W. Pacala.
 		Using moment equations to understand stochastically driven spatial pattern formation in ecological systems.
 		{\em Theor. Popul. Biol.}, 52 (3): 179--197, 1997.

	\bibitem{DvC2000} M.~Demuth and J.~A. van Casteren.
    \newblock {\em Stochastic spectral theory for selfadjoint {F}eller operators}.
    \newblock Probability and its Applications. Birkh\"auser Verlag, Basel, 2000.
    \newblock xii+463 pp.
    
	\bibitem{EK1986} S.~N. Ethier and T.~G. Kurtz.
    {\em Markov processes: Characterization and convergence}.
    \newblock Wiley Series in Probability and Mathematical Statistics: Probability
      and Mathematical Statistics. John Wiley \& Sons Inc., New York, 1986.
    \newblock x+534 pp.
    
	\bibitem{FKKozK2014} D.~Finkelshtein, Y.~Kondratiev, Y.~Kozitsky, and O.~Kutoviy.
	The statistical dynamics of a spatial logistic model and the related
  kinetic equation.
\newblock {\em Math. Models Methods Appl. Sci.}, 25 (2): 343--370, 2015.

	\bibitem{FKT2015} D.~Finkelshtein, Y.~Kondratiev, P.~Tkachov.
		Traveling waves and long-time behavior in a doubly nonlocal Fisher-KPP equation.
 		https://arxiv.org/abs/1508.02215

	\bibitem{FM2004} N.~Fournier and S.~M{\'e}l{\'e}ard.
 		A {Microscopic} {Probabilistic} {Description} of a {Locally}
  	{Regulated} {Population} and {Macroscopic} {Approximations}.
		{\em The Annals of Applied Probability}, 14 (4): 1880--1919, 2004.

	\bibitem{Hen1981} D.~Henry.
    \newblock {\em Geometric Theory of Semilinear Parabolic Equations}, volume~840 of {\em Lecture Notes in Mathematics}.
    \newblock Springer-Verlag, Berlin Heidelberg, 1981.

	\bibitem{KS1991} I.~Karatzas and S.~E. Shreve.
    \newblock {\em Brownian motion and stochastic calculus}, volume 113 of {\em Graduate Texts in Mathematics}.
    \newblock Springer-Verlag, New York, second edition, 1991.
    \newblock xxiv+470 pp.
    
	\bibitem{Mol1972} D.~Mollison.
 		The rate of spatial propagation of simple epidemics.
 		In {\em Proceedings of the {Sixth} {Berkeley} {Symposium} on {Mathematical} {Statistics} and {Probability} ({Univ}. {California}, {Berkeley}, {Calif}., 1970/1971), {Vol}. {III}: {Probability} theory}, pages
  	579--614. Univ. California Press, Berkeley, Calif., 1972.

	\bibitem{Mol1972a} D.~Mollison.
 		Possible velocities for a simple epidemic.
 		{\em Advances in Appl. Probability}, 4: 233--257, 1972.

	\bibitem{Paz1983} A.~Pazy.
    \newblock {\em Semigroups of linear operators and applications to partial differential equations}, volume~44 of {\em Applied Mathematical Sciences}.
    \newblock Springer-Verlag, New York, 1983.
    \newblock viii+279 pp.


	\bibitem{Per2012} A.~P\'{e}rez.
		Feynman-Kac formula for the solution of Cauchy's problem with time dependent L\'{e}vy generator. (English summary) 
 		{\em Commun. Stoch. Anal.},
 		6 (2012), no. 3, 409--419. 

	\bibitem{PS2005} B.~Perthame and P.~E. Souganidis.
 		Front propagation for a jump process model arising in spatial ecology.
 		{\em Discrete Contin. Dyn. Syst.}, 13 (5):
  	1235--1246, 2005.
\end{thebibliography}

\end{document}